\newtheorem{remark}{Remark}[section]
\def\IR{\hbox{\rm I\kern-.2em\hbox{\rm R}}}
\def\as{{\cal A}_{seed}}
\def\ak{{\cal A}_k}
\def\a{{\cal A}}
\def\pex{{\cal P}_{ex}}
\def\pin{{\cal P}_{inex}}
\def\ps{{\cal P}_{seed}}
\def\pup{{\cal P}_{upd}}
\def\pk1{{\cal P}_{in,F}}
\def\ss{S_{seed}}
\def\sin{S_{inex}}
\def\sup{S_{upd}}
\def\tunos{\Theta_{seed}^{(1)}}
\def\tdues{\Theta_{seed}^{(2)}}
\def\tunok{\Theta^{(1)}}
\def\tduek{\Theta^{(2)}}
\begin{document}

\title {Updating constraint preconditioners  \\
for KKT systems in quadratic programming \\
via low-rank corrections\thanks{Work partially
supported by INdAM-GNCS, under the 2013 Projects \emph{Strategie risolutive per
sistemi lineari di tipo KKT con uso di informazioni strutturali}
and \emph{Metodi numerici e software per l'ottimizzazione
su larga scala con applicazioni all'image processing}.}
}
\author {Stefania Bellavia\thanks{Dipartimento  di Ingegneria Industriale, Universit\`a degli Studi di Firenze,
viale Morgagni 40,  50134 Firenze,  Italy,
stefania.bellavia@unifi.it, benedetta.morini@unifi.it.}
\and Valentina De Simone\thanks{Dipartimento di Matematica e Fisica, Seconda Universit\`a degli Studi di Napoli,  
viale A. Lincoln~5, 81100 Caserta, Italy, valentina.desimone@unina2.it, daniela.diserafino@unina2.it.}
\and Daniela di Serafino$^\ddag$\thanks{Istituto di Calcolo e Reti ad Alte Prestazioni,
CNR, via P. Castellino 111, 80131 Napoli, Italy.}
\and Benedetta Morini$^\dag$
}


\newpage

\maketitle
\vskip 5 pt 
\noindent
\centerline{\small Revised version -- July 7, 2015}
\vskip 8 pt

\begin{abstract}
This work focuses on the iterative solution of sequences of KKT linear systems arising in interior
point methods applied to large convex quadratic programming problems. This task is
the computational core of the interior point procedure and an efficient preconditioning
strategy is crucial for the efficiency of the overall method. Constraint preconditioners
are very effective in this context; nevertheless, their computation may be very expensive
for large-scale problems and resorting to approximations of them may be convenient.
Here we propose a procedure for building inexact constraint preconditioners by
updating a {\em seed} constraint preconditioner computed for a KKT matrix at a previous
interior point iteration. These updates are obtained through low-rank corrections of the Schur
complement of the (1,1) block of the seed preconditioner. The updated preconditioners
are analyzed both theoretically and computationally.
The results obtained show that our updating procedure, coupled with 
an adaptive strategy for determining whether to reinitialize or update the preconditioner,
can enhance the performance of interior point methods on large problems.
\end{abstract}
\begin{keywords}
KKT systems, constraint preconditioners, matrix updates, convex quadratic programming, interior point methods.
\end{keywords}

\begin{AMS} 65F08, 65F10, 90C20, 90C51.
\end{AMS}
\maketitle

\section{Introduction\label{sec:intro}}
Second-order methods for constrained and unconstrained optimization require, at each iteration, 
the solution of a  system of linear equations. For large-scale problems it is common to solve 
each  system by an iterative method coupled with a suitable preconditioner.
Since the computation of  a preconditoner for  each linear system can be very expensive, in recent years
there has been a growing interest  in reducing the cost for preconditioning
by sharing some computational effort through subsequent linear systems.

Updating preconditioner frameworks for sequences of linear systems have a common feature: based on information
generated during the solution of a linear system in the sequence, a preconditioner for a subsequent system is generated.
An efficient updating procedure is expected to build a preconditioner which is
less effective in terms of linear iterations than the one computed from scratch, but more convenient
in terms of cost for the overall linear algebra phase. 
The approaches existing in literature can be broadly classified as: 
limited-memory quasi-Newton preconditioners for 
symmetric positive definite and nonsymmetric matrices (see, e.g., \cite{bbmp, gst, mn}),
recycled Krylov information preconditioners for symmetric and nonsymmetric matrices
(see, e.g., \cite{cdg, fr, ggm, lrt}),
updates of factorized preconditioners for symmetric positive definite  and nonsymmetric matrices
(see, e.g., \cite{bbm, bddm_sisc, bddm_sinum, bmp, bb, ccs, tt1,m}).

In this paper we study the problem of preconditioning sequences of KKT systems arising in the
solution of the convex quadratic programing (QP) problem 
\begin{equation}\label{CQP}
\begin{array}{ll}
\mbox{minimize} & \displaystyle \frac{1}{2} x^T Q x + c^Tx , \\[2mm]
\mbox{subject to}           & A_1 x - s = b_1, \;\; A_2 x = b_2, \;\; x + v = u, \;\; (x,s,v) \ge 0
\end{array}
\end{equation}
by Interior Point (IP) methods \cite{g,w}. Here $Q\in \mathbb{R}^{n\times n}$ is symmetric positive semidefinite,
and $A_1\in \mathbb{R}^{m_1\times n}$, $A_2\in \mathbb{R}^{m_2\times n}$, with $m=m_1+m_2\le n$.
Note that $s$ and $v$ are slack variables, used to transform the inequality constraints $A_1 x \ge b_1$ and $x \le u$
into equality constraints.

The updating strategy proposed here concerns constraint preconditioners (CPs)
\cite{bgz, cddd_coap1, ddd, d, fgg, kgw, ps}. Using the factorization of a {\em seed} CP 
computed for some KKT matrix of the sequence, the factorization of an approximate CP is built 
for subsequent systems. The resulting preconditioner is a special case of
{\em inexact} CP \cite{bgvz,dr,lv,ps,ss} and is intended to reduce the computational
cost while preserving effectiveness. To the best of our knowledge, this is the first attempt to 
study, both theoretically and numerically, a preconditioner updating technique
for sequences of KKT systems arising
from (\ref{CQP}). A computational analysis of the reuse of CPs in consecutive IP steps,
which can be regarded as a limit case of preconditioner updating, has been presented in \cite{cddd_simai}.

Concerning preconditioner updates for sequences of systems arising from IP methods, 
we are aware of  the work in \cite{bsz, wol}, where linear programming  problems are considered.
In this case, the KKT linear systems are reduced to the so-called normal equation form; some systems
of the sequence are solved by the Cholesky factorization, while the remaining ones are solved by
the Conjugate Gradient method preconditioned  with  a low-rank correction  of the last computed Cholesky factor. 
 
Motivated  by \cite{bsz, wol}, in this work we adapt the low-rank corrections given therein to our problem.
In our approach the updated preconditioner is an inexact CP
where the Schur complement of the (1,1) block is replaced by a low-rank 
modification of the corresponding Schur complement in the seed preconditioner.
The validity of the proposed procedure is supported by a spectral analysis of the preconditioned matrix
and by numerical results illustrating its performance.

The paper is organized as follows. In Section~\ref{sec2} we provide preliminaries on CPs 
and new spectral analysis results for general inexact CPs. In Section~\ref{sec3} we present our 
updating procedure and specialize the spectral analysis conducted in the
previous section to the updated preconditioners. In Section~\ref{sec:experiments} we discuss implementation
issues of the updating procedure and present numerical results obtained by solving sequences of linear
systems arising in the solution of convex QP problems.
These results show that our updating technique is able to reduce the computational cost for solving
the overall sequence whenever the updating strategy is performed in conjunction with 
adaptive strategies for determining whether to recompute the seed preconditioner or update
the current one.

In the following, $\| \cdot \|$ denotes the vector or matrix 2-norm
and, for any symmetric matrix $A$, $\lambda_{\min}(A)$ and $\lambda_{\max}(A)$ denote
its minimum and maximum eigenvalues. Finally, for any complex number
$\alpha$, $\mathcal{R}(\alpha)$ and $\mathcal{I}(\alpha)$ denote
the real and imaginary parts of $\alpha$, respectively.

\section{Inexact Constraints Preconditioners\label{sec2}}

In this section we discuss the features of the KKT matrices arising in the solution of problem 
(\ref{CQP}) by IP methods and present spectral properties of both CPs and
inexact CPs. This analysis  will be used to develop our updating strategy.
Throughout the paper we assume that the matrix 
$A = \left[ \begin{array}{c} A_1 \\[1mm]  A_2  \end{array} \right]\in \mathbb{R}^{m\times n}$ is full rank. 

The application of an IP method to problem (\ref{CQP}) gives rise to a sequence of 
symmetric indefinite matrices differing by a diagonal, possibly indefinite, matrix.
In fact, at the $k$th IP iteration, the KKT matrix takes the form
$$
\ak=
\left[
\begin{array}{cc}
Q+\tunok_k  & A^T  \\
A   &     -\tduek_k   
\end{array}
\right],
$$
where $\Theta_k^{(1)} \in \mathbb{R}^{n \times n}$ is diagonal positive definite and 
$\Theta_k^{(2)} \in \mathbb{R}^{m \times m}$ is diagonal positive semidefinite. 
In particular, 
$$
\begin{array}{l}
\Theta_k^{(1)} = X_k^{-1}W_k + V_k^{-1} T_k, \\[1mm]
\Theta_k^{(2)} = \left[ 
\begin{array}{cc}
Y_k^{-1}S_k & 0  \\
0   &   0    
\end{array}
\right],
\end{array}
$$
where $(x_k, w_k), \;  (s_k,y_k)$, and $( v_k, t_k)$ are 
the pairs of complementary variables of problem (\ref{CQP}) evaluated at the current iteration,
and $X_k,W_k,S_k,Y_k,V_k$ and $T_k$ are the corresponding diagonal matrices according to the standard 
IP notation. If the QP problem has no linear inequality constraints then $\Theta_k^{(2)}$ is the zero matrix;
otherwise $\Theta_k^{(2)}$  admits positive diagonal entries corresponding to 
slack variables for linear inequality constraints. 

To simplify the notation,  in the rest of the paper we drop the iteration index $k$ from $\ak$,
$\Theta_k^{(1)}$ and $\Theta_k^{(2)}$. Hence, 
\begin{equation}\label{a}
\a=\ak
\end{equation}
and the CP for $\a$ is given by
\begin{equation}\label{pg}
\pex=\left[ 
\begin{array}{cc}
G  & A^T  \\
A   &   -\tduek    
\end{array}
\right],
\end{equation}
where $G$ is an approximation to $Q+\tunok$. Clearly, the application of $\pex$ requires its factorization.

In order to define $\pex$, one possibility is to specify $G$ and then explicitly factorize
the preconditioner; alternatively, implicit factorizations can be used, where $\pex$ is derived from
specially chosen factors $M$ and $C$ in $\pex = MCM^T$ (see, e.g.,  \cite{dgsw,dw}). 
We follow the former approach and choose $G$ as the diagonal matrix with the same diagonal
entries as $Q+\tunok$, i.e.,
\begin{equation}\label{g}
G={\rm diag}(Q+\tunok).
\end{equation}
The resulting matrix $\pex^{-1}\a$  
has an eigenvalue at 1 with multiplicity $2m-p$,  with $p \! = \! rank(\tduek)$, and 
$n-m+p$ real positive eigenvalues such that the  better $G$ approximates $Q+ \tunok$ the more clustered around 1
they are \cite{d, kgw}.

The factorization of $\pex$ can be obtained by computing 
a Cholesky-like factorization of  the negative Schur complement, $S$, 
of $G$ in $\a$,
\begin{equation}\label{S}
S=AG^{-1}A^T +\tduek,
\end{equation}
and using the  block decomposition
\begin{equation}\label{pexfatt}
\pex=
\left[ 
\begin{array}{cc}
I_n  & 0  \\
AG^{-1}  & I_m      
\end{array}
\right]
\left[ 
\begin{array}{cc}
G  &  0  \\
0   &    -S    
\end{array}
\right]\left[ 
\begin{array}{cc}
I_n  & G^{-1}A^T  \\
0   & I_m    
\end{array}
\right],
\end{equation}
where $I_r$ is the identity matrix of dimension $r$, for any integer $r$.

In problems where a large part of the computational cost for  
solving the linear system depends on the computation of a Cholesky-like factorization of $S$,
this matrix may be replaced by a computationally cheaper approximation of it \cite{dr, lv,ps}.
This yields the following {\em inexact} CP:
$$
\pin=\left[ 
\begin{array}{cc}
I_n  & 0  \\
AG^{-1}  & I_m     
\end{array}
\right]
\left[ 
\begin{array}{cc}
G  &  0  \\
0   &    -\sin
\end{array}
\right]\left[ 
\begin{array}{cc}
I_n  & G^{-1}A^T  \\
0   & I_m       
\end{array}
\right] ,
$$
where $\sin$ is the approximation of $S$.

\subsection{Analysis of the eigenvalues}

Due to the indefiniteness of $\pin^{-1}\a$ 
and the specific form of $\pin$, the preconditioned linear system must be solved by either 
nonsymmetric solvers, such as GMRES \cite{gmres} and QMR \cite{fn2}, or by the simplified version  
of the QMR method \cite{fn}; such a simplified solver, named SQMR, is capable of exploiting
the symmetry of $\a$ and $\pin$ and is used in our numerical experiments.

Although the convergence of many Krylov solvers 
is not fully characterized by the spectral properties of the coefficient matrix, in many practical cases 
it depends to a large extent on the eigenvalue distribution.
For this reason, exploiting the spectral analysis of $\pin^{-1}\a$ made in the general context of
saddle point problems \cite{bs,b,ss},  we provide
bounds on the eigenvalues which will guide our preconditioner updates.

In order to carry out our analysis we write the eigenvalue problem for the matrix
$\pin^{-1}\a$ in the form
\begin{equation}\label{eig_pb}
\left[
\begin{array}{cc}
Q+\tunok  & A^T  \\
A   &   -\tduek     
\end{array}
\right] 
\left[
\begin{array}{c}
x  \\
y    
\end{array}
\right]= \lambda\pin\left[
\begin{array}{c}
x  \\
y    
\end{array}
\right].
\end{equation}
Starting from  results
in~\cite{bs}, in the next theorem we give
bounds on the eigenvalues which highlight the dependence of the spectrum of $\pin^{-1}\a$
on that of $\sin^{-1}S$.
\vskip5pt
\begin{theorem}\label{eigZ_Cn0} 
Let $\lambda$ and $[x^T, y^T]^T$ be an eigenvalue of problem~(\ref{eig_pb})  
and a corresponding eigenvector. Let
\begin{equation} \label{X}
X = G^{-\frac{1}{2}} (Q+\Theta^{(1)}) G^{-\frac{1}{2}}
\end{equation}
and suppose that $2I_n-X$ is positive definite.
Let 
\begin{eqnarray}
\bar \lambda &=&  \lambda_{\max}(\sin^{-1}S)\,  \max\{  2-\lambda_{\min}(X),\, 1 \} ,
\label{maxl_Z_gen_C} \\
\underline{\lambda} &=&  \lambda_{\min}(\sin^{-1}S)\,  \min\{  2-\lambda_{\max}(X) ,\, 1 \} ,
\label{minl_Z_gen_C}
\end{eqnarray}
if $\tduek \neq 0$, and
\begin{eqnarray}
\bar{\lambda} &=&   \lambda_{\max}(\sin^{-1}S)\, ( 2 -\lambda_{\min}(X) ) \label{maxl_Z_gen},\\
\underline{\lambda} &=&   \lambda_{\min}(\sin^{-1}S)\, (2-\lambda_{\max} (X) )  \label{minl_Z_gen},
\end{eqnarray}
otherwise.

Then, $\pin^{-1}\a$ has at most $2m$ eigenvalues with
nonzero imaginary part, counting conjugates. 
Furthermore, if $\lambda$ has nonzero imaginary part, then 
\begin{equation} \label{boundRe}
\frac{1}{2}(\lambda_{\min}(X)+\underline{\lambda} ) \le \mathcal{R}(\lambda) \le
\frac{1}{2}(\lambda_{\max}(X)+\bar {\lambda}) ,
\end{equation}
otherwise
\begin{eqnarray} 
\min\{\lambda_{\min}(X),\,  \underline{\lambda}\} \le \lambda \le
\max\{\lambda_{\max}(X),\, \bar{ \lambda}\}, \quad \mbox{for } y \neq 0,  \label{bound_yne0} \\
\lambda_{\min}(X)\le \lambda \le \lambda_{\max}(X), \quad \mbox{for } y=0. \qquad\qquad \label{bound_y0}
\end{eqnarray}
Finally, the imaginary part of $\lambda$ satisfies
\begin{equation} \label{boundY}
|\mathcal{I}(\lambda)| \le \sqrt{\lambda_{\max}(S_{inex}^{-1}AG^{-1}A^T)}  \|I_n-X\| .
\end{equation}
\end{theorem}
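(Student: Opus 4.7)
The plan is to recast the generalized eigenvalue problem~(\ref{eig_pb}) in reduced form via the block factorization of $\pin$ and the change of variables $\hat{x} = G^{1/2}x$, $B = AG^{-1/2}$, which turns~(\ref{eig_pb}) into the coupled system
\begin{equation}\label{plan_reduced}
(X - \lambda I_n)\hat{x} = (\lambda - 1)B^T y, \qquad (1-\lambda)B\hat{x} = \Theta^{(2)}y + \lambda(BB^T - \sin)y.
\end{equation}
Both $X$ and $\sin^{-1}S = \sin^{-1}(BB^T + \Theta^{(2)})$ now appear naturally, so the bounds will be driven by the Rayleigh quotients $\gamma/\alpha \in [\lambda_{\min}(X),\lambda_{\max}(X)]$ and $(\delta + \mu)/\sigma \in [\lambda_{\min}(\sin^{-1}S), \lambda_{\max}(\sin^{-1}S)]$, where $\alpha = \hat{x}^*\hat{x}$, $\gamma = \hat{x}^*X\hat{x}$, $\delta = y^*BB^T y$, $\mu = y^*\Theta^{(2)}y$, $\sigma = y^*\sin y$. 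The analysis then splits on whether $y = 0$.

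If $y = 0$, the first identity in~(\ref{plan_reduced}) reduces to $X\hat{x} = \lambda\hat{x}$, giving~(\ref{bound_y0}); and when $\lambda\neq 1$ the second identity forces $B\hat{x} = 0$, so $x\in\ker A$. The symmetric pencil $(Q+\Theta^{(1)},G)$ restricted to this $(n-m)$-dimensional subspace contributes $n-m$ real eigenvalues of $\pin^{-1}\a$, leaving at most $(n+m)-(n-m) = 2m$ eigenvalues with nonzero imaginary part; these occur in conjugate pairs since $\pin^{-1}\a$ is real.

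If $y\neq 0$, multiplying the first equation of~(\ref{plan_reduced}) by $\hat{x}^*$ and the second by $y^*$, and setting $\tau = y^*B\hat{x}$ (so $\hat{x}^*B^T y = \bar{\tau}$), yields $(\lambda-1)\bar{\tau} = \gamma - \lambda\alpha$ and $(1-\lambda)\tau = \mu + \lambda(\delta - \sigma)$. Adding these and imposing that $\tau + \bar{\tau}$ be real produces, when $\mathcal{I}(\lambda)\neq 0$, the constraint $\gamma + \sigma = \alpha + \mu + \delta$ together with a closed expression for $\mathcal{R}(\lambda)$ as a weighted combination of the two Rayleigh quotients. Inserting the extreme values of $\gamma/\alpha$ and $(\delta+\mu)/\sigma$ and using $2I_n - X\succ 0$ (which keeps the relevant denominators positive) produces~(\ref{boundRe}); the parallel argument with $\mathcal{I}(\lambda) = 0$ gives~(\ref{bound_yne0}), and specializing to $\Theta^{(2)} = 0$ removes the $\mu$ contribution and yields the sharper bounds~(\ref{maxl_Z_gen})--(\ref{minl_Z_gen}).

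The delicate point is the imaginary-part estimate~(\ref{boundY}), for which I would combine two norm bounds. First, the constraint $\gamma + \sigma = \alpha + \mu + \delta$ together with the scalar identities above gives $2\mathcal{R}(\tau) = \sigma - \alpha - \delta$, so $\|\hat{x} + B^T y\|^2 = \sigma$; taking the $\ell^2$ norm of the rewritten first equation $(X - I_n)\hat{x} = (\lambda-1)(\hat{x} + B^T y)$ then produces $|\lambda - 1|^2 \sigma \le \|I_n - X\|^2 \alpha$. Second, because $X$ is real symmetric one has $\|(X - \lambda I_n)^{-1}\| \le 1/|\mathcal{I}(\lambda)|$, so the representation $\hat{x} = (\lambda - 1)(X - \lambda I_n)^{-1}B^T y$ gives $|\mathcal{I}(\lambda)|^2 \alpha \le |\lambda - 1|^2\delta$. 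Multiplying the two inequalities cancels $|\lambda - 1|^2$ and $\alpha$, leaving $|\mathcal{I}(\lambda)|^2\sigma \le \|I_n - X\|^2\delta$; dividing by $\sigma$ and using $\delta/\sigma \le \lambda_{\max}(\sin^{-1}AG^{-1}A^T)$ closes~(\ref{boundY}). The main obstacle I expect is recognizing that these two independent bounds must be \emph{multiplied}, not chained, so that the scale-dependent quantities $\alpha$ and $|\lambda-1|^2$ drop out simultaneously and only the desired spectral data survive.
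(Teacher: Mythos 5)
Your reduction of (\ref{eig_pb}) to the coupled system, and your treatment of the imaginary part, are sound: the multiplicative combination of $|\lambda-1|^2\sigma\le\|I_n-X\|^2\alpha$ with $|\mathcal{I}(\lambda)|^2\alpha\le|\lambda-1|^2\delta$ is a correct, self-contained route to (\ref{boundY}), whereas the paper invokes \cite[Proposition 2.12]{bs} to get $|\mathcal{I}(\lambda)|\le\|Y\|$ with $Y=R^{-1}AG^{-1/2}(I_n-X)$ and then bounds $\|Y\|$. The genuine gap is in the real-part bounds. Contracting the two block equations with $\hat x^*$ and $y^*$ yields, for $\mathcal{I}(\lambda)\neq 0$, exactly $2\mathcal{R}(\tau)=\sigma-\alpha-\delta$ and the single identity $(\alpha+\sigma-\delta)\bigl(\mathcal{R}(\lambda)-|\lambda|^2\bigr)=(1-\mathcal{R}(\lambda))(\gamma+\mu)$; your claimed constraint $\gamma+\sigma=\alpha+\mu+\delta$ does not follow from these, and no closed expression for $\mathcal{R}(\lambda)$ as a combination of $\gamma/\alpha$ and $(\delta+\mu)/\sigma$ is recoverable from this data. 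The obstruction is structural: the correct identity is $\mathcal{R}(\lambda)=\frac{u^*Xu+v^*Zv}{2\|u\|^2}$ in the symmetrizing variables $u=\hat x+B^Ty$, $v=R^Ty$ (with $S_{inex}=RR^T$), where $Z=R^{-1}\bigl(B(2I_n-X)B^T+\Theta^{(2)}\bigr)R^{-T}$; both $u^*Xu$ and $y^*B(2I_n-X)B^Ty$ contain the cross terms $\hat x^*XB^Ty$ and $y^*BXB^Ty$, which are not functions of your scalars $\alpha,\gamma,\delta,\mu,\sigma,\tau$. This is also where the hypothesis on $2I_n-X$ really enters: its positive definiteness makes $Z$ positive definite so that the saddle-point bounds of \cite{bs} apply, and the factor $\max\{2-\lambda_{\min}(X),1\}$ in $\bar\lambda$ arises from estimating $v^*Zv/\|v\|^2$ through a rank-retaining factorization giving $S=U^TU$ and $N^TN$ similar to $S_{inex}^{-1}S$ --- not from ``keeping denominators positive.''

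A second, smaller gap concerns the count of at most $2m$ non-real eigenvalues. Your claim that the pencil $(Q+\Theta^{(1)},G)$ restricted to $\ker A$ contributes $n-m$ real eigenvalues of $\pin^{-1}\a$ is valid for the exact constraint preconditioner but fails for the inexact one: if $\hat x\in\ker B$ is an eigenvector of the projected pencil, the first block equation forces $(\lambda-1)B^Ty=(X-\lambda I_n)\hat x$, and the second block equation then imposes an additional condition on $y$ that generically cannot be met, so $\ker A\times\{0\}$ is not an invariant subspace of $\pin^{-1}\a$. This part of the statement needs \cite[Proposition 2.3]{bs} (or an equivalent dimension-counting argument on the transformed pencil), which is precisely how the paper handles it.
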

\vskip -5pt
\begin{proof}
Let 
$$
\sin = RR^T,
$$
be the Cholesky factorization of $\sin$ with  $R$ lower triangular.
With a little algebra,  the eigenvalue problem  (\ref{eig_pb})
becomes
$$   
\left[ 
\begin{array}{cc}
X & Y^T\\
Y & -Z 
\end{array}
\right]
\left[
\begin{array}{c}
 u  \\
 v   
\end{array}
\right]= \lambda
\left[ 
\begin{array}{cc}
I_n &  0 \\
0  & -I_m      
\end{array}
\right]
\left[
\begin{array}{c}
 u  \\
 v    
\end{array}
\right],
$$
where $X$ is the matrix in (\ref{X}) and
\begin{eqnarray}
&& Y = R^{-1}AG^{-\frac{1}{2}}(I_n- X),    \label{Y}\\
&& Z = R^{-1}AG^{-\frac{1}{2}}(2I_n-X)G^{-\frac{1}{2}}A^T   R^{-T} + R^{-1}\tduek R^{-T}, \nonumber \\
&& \left[
\begin{array}{c}
 u  \\
 v    
\end{array}
\right] =
\left[ 
\begin{array}{cc}
G^{\frac{1}{2}} &G^{-1/2}A^T   \\
0  & R^T
\end{array}
\right]
\left[\begin{array}{c}
x \\
y   
\end{array}
\right] \nonumber
\end{eqnarray}
\vspace*{-4mm}
\par \noindent
(see \cite{bs}).

From~\cite[Proposition 2.3]{bs} it follows that  $\pin^{-1}\a$ has at most $2m$ eigenvalues with
nonzero imaginary part, counting conjugates.  Furthermore, \cite[Proposition 2.12]{bs} shows that, when $Y$ is full rank and $Z$ is
positive semidefinite,   if $\lambda$ has nonzero imaginary part, then
\begin{equation} \label{bound_real_complex}
\frac{1}{2}(\lambda_{\min}(X)+\lambda_{\min}(Z))\le \mathcal{R}(\lambda) \le 
\frac{1}{2}(\lambda_{\max}(X)+\lambda_{\max}(Z)) ,
\end{equation}
otherwise 
\begin{eqnarray}
\min\{\lambda_{\min}(X),\,  \lambda_{\min}(Z)\}\le &\lambda& \le 
\max\{\lambda_{\max}(X),\, \lambda_{\max}(Z)\}, \quad \mbox{for } v\neq 0, \label{bound_vne0} \\
\lambda_{\min}(X)\le &\lambda& \le \lambda_{\max}(X) , \quad \mbox{for } v = 0. \label{bound_v0}
\end{eqnarray}
In addition, the imaginary part of $\lambda$ satisfies
\begin{equation} \label{bound_im}
|\mathcal{I}(\lambda)| \le  \|Y\| .
\end{equation}
The same results hold if $Y$ is rank deficient with $Z$ positive definite on ker$(Y^T)$.

By assumptions, $Z$ is positive definite and hence inequalities~(\ref{bound_real_complex})--(\ref{bound_im})
hold. Furthermore, since $v=R^Ty$ and $R$ is nonsingular, (\ref{bound_y0}) is equivalent to~(\ref{bound_v0}).
It remains to prove (\ref{boundRe}), (\ref{bound_yne0}) and (\ref{boundY}); we proceed
by bounding $\lambda_{\min}(Z)$, $\lambda_{\max}(Z)$ and $\|Y\|$.

We consider first  the case $\tduek \ne  0$. In order to provide an upper bound on
$\lambda_{max}(Z)$, we note that
$$
Z=R^{-1}\left[ 
\begin{array}{cc}
AG^{-\frac{1}{2}}&   ({\tduek})^{\frac{1}{2}}
\end{array}
\right]
\left[ 
\begin{array}{cc}
2I_n-X&  0 \\
0  & I_m
\end{array}
\right]
\left[ 
\begin{array}{c}
G^{-\frac{1}{2}}A^T\\
({\tduek})^{\frac{1}{2}} 
\end{array}
\right] R^{-T}.
$$
Let $VU$ be the rank-retaining factorization  of
$\left[ 
\begin{array}{c}
G^{-\frac{1}{2}}A^T\\
({\tduek})^{\frac{1}{2}}
\end{array}
\right] $,
where $V\in \mathbb{R}^{(n+m)\times m}$ has orthogonal columns and $U \in \mathbb{R}^{m\times m}$
is upper triangular and nonsingular. Then, $S=U^TU$ and 
\begin{equation}\label{fattZ}
Z=R^{-1}U^TV^T
\left[ 
\begin{array}{cc}
2I_n-X&  0 \\
0  & I_m
\end{array}
\right] VUR^{-T}.
\end{equation}
Letting $N=R^{-1}U^T$ , we have 
$$
\lambda_{\max}(Z)=\|Z\|\le \|N\|^2\,  \max \{ 2-\lambda_{\min}(X) ,\, 1 \} .
$$
Since $N^TN$ is similar to $\sin^{-1}\, S$,
the upper bounds in (\ref{boundRe}) and (\ref{bound_yne0}),
with $\bar{\lambda}$ given in (\ref{maxl_Z_gen_C}),
follow from the upper bounds in (\ref{bound_real_complex}) and (\ref{bound_vne0}).

In order to provide a lower bound on $\lambda_{\min}(Z)$, we observe that
\begin{equation}\label{lminz}
\frac{1}{\lambda_{\min}(Z)} = \left\|Z^{-1} \right\|
\le \left\|N^{-1} \right\|^2 \left \| \left(V^T
\left[ 
\begin{array}{cc}
2I_n-X&  0 \\
0  & I_m
\end{array}
\right] V \right)^{-1}\right\| .
\end{equation}
By noting that $N^{-T}N^{-1}$ is similar to $S^{-1}\, \sin$
and hence
$$
\left\| N^{-1} \right\|^2  =  \frac{1}{\lambda_{\min}(\sin^{-1} \, S )},
$$
and by applying the Courant-Fischer minimax characterization (see, e.g., \cite[Theorem 8.1.2]{gv})
to the rightmost term in (\ref{lminz}), we have that
the lower bounds in (\ref{boundRe}) and (\ref{bound_yne0}),
with $\underline{\lambda}$ given in (\ref{minl_Z_gen_C}), 
follow from the lower bounds in (\ref{bound_real_complex}) and
(\ref{bound_vne0}).

When $\tduek = 0$, inequalities (\ref{boundRe})--(\ref{bound_yne0})
can be derived by assuming that $VU$ is the rank-retaining factorization of
$G^{-\frac{1}{2}}A^T$,  with $V\in \mathbb{R}^{n\times m}$ having orthogonal columns
and $U \in \mathbb{R}^{m\times m}$ upper triangular and nonsingular.
In this case equality~(\ref{fattZ}) becomes
$$
Z=R^{-1}U^TV^T (2I_n-X) VUR^{-T}
$$
and the thesis follows by reasoning as above.

Finally, from (\ref{Y}) it follows that
\begin{equation} \label{boundY2}
\|Y\|\le \|R^{-1} A G^{-\frac{1}{2}}\| \, \|I_n-X\| = \sqrt{\lambda_{\max}(R^{-1}AG^{-1}A^TR^{-T})} \, \|I_n-X\| .
\end{equation}
Inequality (\ref{boundY}) follows from (\ref{bound_im}) and (\ref{boundY2}) by noting that
$R^{-1}AG^{-1}A^TR^{-T}$ and $\sin^{-1}AG^{-1}A^T$ are similar. 
\end{proof}

\vskip 5pt
\begin{remark}
Note that the assumption on $2I_n-X$ in the theorem can be fulfilled by a proper scaling
of $X$ enforcing its eigenvalues to be smaller than 2.
Furthermore, if $Q$ is diagonal, then $X=I_n$ and
from (\ref{boundY}) it follows that all the eigenvalues of $\pin^{-1}\a$ are real.
In this case $\pin^{-1}\a$
has at least $n$ unit eigenvalues, with $n$ associated independent eigenvectors of the
form $[x^T, 0^T]^T$,
and the remaining eigenvalues lie
in the interval $[\lambda_{\min}(\sin^{-1} S), \, \lambda_{\max} (\sin^{-1} \, S)]$
(see also \cite{dr}).
\end{remark}


\section{Building an inexact constraint preconditioner by updates\label{sec3}}

In this section we design a strategy for updating the CP built 
for some {\em seed} matrix of the KKT sequence. The update is based on 
low-rank corrections and generates  inexact constraint
preconditioners for subsequent systems.

Let us consider a KKT matrix $\as$ generated at some iteration $r$ of 
the IP procedure,
\begin{equation}\label{seed_matrixR}
\as = \left[ 
\begin{array}{cc}
Q+\tunos  & A^T  \\
A   &  -\tdues  
\end{array}
\right],
\end{equation}
where $\tunos\in \mathbb{R}^{n \times n}$ is diagonal positive definite and 
$\tdues\in \mathbb{R}^{m \times m}$ is diagonal positive semidefinite.
The corresponding  {\em seed} CP has the form
\begin{equation}\label{PH}
\ps=\left[ 
\begin{array}{cc}
H  & A^T  \\
A   &  -\tdues      
\end{array}
\right],
\end{equation}
where $H={\rm diag}(Q+\tunos)$. 
Assume that a block factorization of $\ps$ has been obtained by computing
the Cholesky-like factorization of the negative Schur complement, $\ss$, of $H$ in $\ps$,
\begin{equation}\label{SS}
\ss=AH^{-1}A^T+\tdues= LDL^T,
\end{equation}
where $L$ is unit lower triangular and $D$ is diagonal positive definite.

Let $\a$ in (\ref{a}) be a subsequent matrix of the KKT sequence, $\pex$ in (\ref{pexfatt})
the corresponding CP, with $G$ defined as in (\ref{g}), and $S$ the Schur complement (\ref{S}).
We replace $S$ with a suitable update of $\ss$, named  $\sup$,
obtaining the inexact preconditioner  
\begin{eqnarray}\label{pupdated}
\pup&=&
\left[ 
\begin{array}{cc}
I_n  & 0  \\
AG^{-1}   & I_m      
\end{array}
\right]
\left[ 
\begin{array}{cc}
G  &  0  \\
0   &    -\sup  
\end{array}
\right]\left[ 
\begin{array}{cc}
I_n  & G^{-1}A^T  \\
0   & I_m       
\end{array}
\right].
\end{eqnarray}
In the remainder of this section we show that 
specific choices of $\sup$ provide easily computable bounds on the eigenvalues of $\sup^{-1}S$;
we use these bounds, together with the spectral analysis in Section~\ref{sec2}, for constructing
practical inexact preconditioners by updating techniques.
For ease of presentation, we consider  the cases $\tduek=0$ and $\tduek\neq 0$ separately.

\subsection{Updated  preconditioners for $\tduek=0$}

We consider a low-rank update/downdate of $\ss=AH^{-1}A^T=LDL^T$ that generates
a matrix $\sup$ of the form
\begin{equation}\label{sin}
 \sup =AJ^{-1}A^T=L_{upd} D_{upd} L_{upd}^T, 
\end{equation}
where $J$ is a suitably chosen diagonal positive definite matrix,
$L_{upd}$ is unit lower triangular and $D_{upd}$ is diagonal positive definite.

A guideline for choosing $J$ is provided by Theorem~\ref{eigZ_Cn0} and by
a result in~\cite{bsz}, reported next for completeness.
\vskip 5pt 
\begin{lemma}\label{gammabsz}
Let $B\in \mathbb{R}^{m\times n}$ be full rank and let $E, \, F\in \mathbb{R}^{n\times n}$ be symmetric and positive definite.
Then, any eigenvalue $\lambda$ of  $(BEB^T)^{-1} \, BFB^T$ satisfies
\begin{equation}\label{bound_eig_cnon0}
\lambda_{\min}(E^{-1}F)\le \lambda ((BEB^T)^{-1} \, BFB^T ) \le \lambda_{\max}(E^{-1}F) .
\end{equation}
\end{lemma}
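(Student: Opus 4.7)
The plan is to reduce the eigenvalue problem for $(BEB^T)^{-1}BFB^T$ to a generalized Rayleigh quotient for the pencil $(F,E)$ acting on vectors of the form $B^T v$, and then bound that quotient using the extremal eigenvalues of $E^{-1}F$.

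First I would observe that, since $B$ is full rank with $m \le n$ (needed for $BEB^T$ to be invertible) and $E$ is symmetric positive definite, $BEB^T$ is symmetric positive definite. Writing the generalized eigenvalue problem $BFB^T v = \lambda\, BEB^T v$ with $v \neq 0$, multiplying on the left by $v^T$ gives
\begin{equation*}
\lambda = \frac{v^T BFB^T v}{v^T BEB^T v} = \frac{(B^T v)^T F (B^T v)}{(B^T v)^T E (B^T v)}.
\end{equation*}
Because $B$ has full row rank, the map $v \mapsto B^T v$ is injective, so the vector $w = B^T v \in \mathbb{R}^n$ is nonzero whenever $v$ is.

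Next I would recognize the right-hand side as the generalized Rayleigh quotient $w^T F w / w^T E w$ of the symmetric positive definite pencil $(F,E)$. Using the change of variables $z = E^{1/2} w$ (with $E^{1/2}$ the SPD square root of $E$), this becomes the standard Rayleigh quotient $z^T (E^{-1/2} F E^{-1/2}) z / (z^T z)$ of the symmetric matrix $E^{-1/2} F E^{-1/2}$, which is similar to $E^{-1}F$ and therefore has the same (real, positive) spectrum. Applying the Courant–Fischer theorem to this symmetric matrix yields
\begin{equation*}
\lambda_{\min}(E^{-1}F) \le \frac{w^T F w}{w^T E w} \le \lambda_{\max}(E^{-1}F),
\end{equation*}
which is exactly (\ref{bound_eig_cnon0}).

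There is no real obstacle here: the whole argument hinges on the elementary but crucial observation that $B^T v \neq 0$ for every nonzero eigenvector $v$, which is what lets us convert the $m$-dimensional quotient defined by $B E B^T$ and $B F B^T$ into an $n$-dimensional generalized Rayleigh quotient for $(F,E)$. Once that reduction is in place, the inequality is a direct consequence of the min--max characterization of eigenvalues.
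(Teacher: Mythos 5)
Your proof is correct and complete: the reduction of the eigenproblem for $(BEB^T)^{-1}BFB^T$ to the generalized Rayleigh quotient $w^TFw/w^TEw$ with $w=B^Tv\neq 0$, followed by Courant--Fischer applied to $E^{-1/2}FE^{-1/2}$, is exactly the standard argument for this bound. Note that the paper itself does not prove this lemma --- it only restates it from the cited technical report of Baryamureeba, Steihaug and Zhang ``for completeness'' --- so there is no in-paper proof to compare against, but your argument is the natural one underlying that result and has no gaps.
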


For any diagonal matrix $W \in \IR^n$, let $ \gamma(W)=(\gamma_1(W), \ldots,  \gamma_n(W))^T$ 
be the vector with entries given by the diagonal entries of $WG^{-1}$  sorted
in nondecreasing order, i.e.,
\begin{equation}\label{gamma}
\min_{1\le i\le n}  \frac{W_{ii} }{G_{ii}}  = \gamma_1(W) \le \gamma_2(W) \le \cdots\le \gamma_n(W) =  
\max_{1\le i\le  n}  \frac{W_{ii} }{G_{ii}} .
\end{equation}
By Lemma~\ref{gammabsz}, 
the eigenvalues of $\sup^{-1}S$ can be bounded by
using the diagonal entries of $JG^{-1}$, i.e.,
\begin{equation}\label{bound_eig_c0}
\gamma_1(J)\le \lambda (\sup^{-1} S) \le \gamma_n(J);
\end{equation}
then, Theorem~\ref{eigZ_Cn0} implies the following result.

\begin{corollary}\label{corYZtdue0}
Let $\a$,  $\pup$ and $\sup$  be  the matrices in (\ref{a}), (\ref{pupdated}) and (\ref{sin}),
respectively, and $\lambda$ an eigenvalue of $\pup^{-1}\a$.
Let $\gamma_1(J)$ and $\gamma_n(J)$ be defined according to (\ref{gamma})
and let $X$ be the matrix in (\ref{X}). If $2I_n-X$ is positive definite, then
(\ref{boundRe})--(\ref{bound_y0})  hold with 
\begin{eqnarray}
\bar \lambda & \le &  \gamma_n(J) \,   (2-\lambda_{\min}(X) \label{maxl_Z}),\\
\underline{\lambda} & \ge &  \gamma_1(J)  \,  (2-\lambda_{\max}(X)) \label{minl_Z}.
\end{eqnarray}
Furthermore,
\begin{equation}
|\mathcal{I}(\lambda)|\le \sqrt{\gamma_n(J)} \, \|I_n-X\| . \label{boundY_gamma}
\end{equation}
\end{corollary}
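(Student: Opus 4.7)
The plan is to reduce the corollary to a direct combination of Theorem~\ref{eigZ_Cn0} (in the case $\tduek=0$) and Lemma~\ref{gammabsz}, with no deeper analysis needed. Since we are in the subsection dedicated to $\tduek=0$, the Schur complement (\ref{S}) collapses to $S=AG^{-1}A^T$, and by (\ref{sin}) we have $\sup=AJ^{-1}A^T$. Because $A$ is full rank and $J$ is diagonal positive definite, $\sup$ is symmetric positive definite, so $\pup$ is a legitimate inexact constraint preconditioner in the sense of Section~\ref{sec2}, and Theorem~\ref{eigZ_Cn0} applies with $\sin\equiv\sup$.

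Next I would apply Lemma~\ref{gammabsz} with $B=A$, $E=J^{-1}$, $F=G^{-1}$ (all of which satisfy the hypotheses). This yields
\[
\lambda_{\min}(JG^{-1}) \;\le\; \lambda\bigl(\sup^{-1}S\bigr) \;\le\; \lambda_{\max}(JG^{-1}).
\]
Since $JG^{-1}$ is diagonal, its eigenvalues are exactly the ratios $J_{ii}/G_{ii}$, whose extremes are $\gamma_1(J)$ and $\gamma_n(J)$ by (\ref{gamma}). This delivers the two-sided bound (\ref{bound_eig_c0}) for $\lambda(\sup^{-1}S)$.

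The remaining step is to plug these bounds into the $\tduek=0$ formulas (\ref{maxl_Z_gen}) and (\ref{minl_Z_gen}) of Theorem~\ref{eigZ_Cn0}. The assumption that $2I_n-X$ is positive definite guarantees $2-\lambda_{\min}(X)>0$ and $2-\lambda_{\max}(X)>0$, so multiplying by these positive scalars preserves inequality directions, giving
\[
\bar\lambda \;=\; \lambda_{\max}(\sup^{-1}S)\,(2-\lambda_{\min}(X)) \;\le\; \gamma_n(J)\,(2-\lambda_{\min}(X)),
\]
and analogously the lower bound $\underline{\lambda}\ge\gamma_1(J)\,(2-\lambda_{\max}(X))$. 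Substituting into (\ref{boundRe})--(\ref{bound_y0}) yields the claimed real-part and real-eigenvalue enclosures. Finally, for the imaginary part, I would start from (\ref{boundY}) and use $\lambda_{\max}(\sup^{-1}AG^{-1}A^T)=\lambda_{\max}(\sup^{-1}S)\le \gamma_n(J)$ to obtain (\ref{boundY_gamma}).

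There is no real obstacle here; the proof is essentially a verification that each invocation is legal and that the monotonicity of the substitutions is correct. The only point worth double-checking is the sign condition on $2-\lambda_{\max}(X)$, so that the lower bound on $\underline\lambda$ does not have its inequality accidentally reversed; this follows cleanly from the positive definiteness of $2I_n-X$ already assumed in the hypothesis.
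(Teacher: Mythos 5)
Your proposal is correct and follows exactly the route the paper takes (which it states only in the two lines preceding the corollary): apply Lemma~\ref{gammabsz} with $B=A$, $E=J^{-1}$, $F=G^{-1}$ to get $\gamma_1(J)\le\lambda(\sup^{-1}S)\le\gamma_n(J)$, then substitute into the $\tduek=0$ bounds (\ref{maxl_Z_gen})--(\ref{minl_Z_gen}) and (\ref{boundY}) of Theorem~\ref{eigZ_Cn0}, using $S=AG^{-1}A^T$ for the imaginary-part bound. Your extra checks (positivity of $2-\lambda_{\max}(X)$ so the inequality directions are preserved, and positive definiteness of $\sup$) are exactly the right details to verify.
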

%

The key issue is to define $J$ such that a good tradeoff can be achieved between the
effectiveness of the bounds provided by Corollary~\ref{corYZtdue0}
and the cost for building  the factors $L_{upd}$ and $D_{upd}$ in (\ref{sin}).
On the basis of this consideration, following \cite{bsz} we define $\sup$ as a low-rank
correction of $\ss$ of the form
$$
\sup = AH^{-1}A^T+ \bar A \bar K \bar A^T= A \left( H^{-1} + K \right) A^T,
$$
where $K \in  \IR^{n\times n}$ is a diagonal matrix with only $q<n$ nonzero entries
on the diagonal, $\bar K \in \IR^{q\times q}$ is the principal submatrix of $K$ containing
these nonzero entries, $\bar A \in \IR^{m\times q}$ is made of the corresponding
$q$ columns of $A$, and $J^{-1} = H^{-1} + K$ accounts for major changes from $H$ to $G$.
In order to choose $K$, we consider the vector $\gamma(H)$, defined according to (\ref{gamma}),
and the vector of integers $l=(l_1,\ldots,l_n)^T$ such that $l_i$ gives the position of $H_{ii}/G_{ii}$
in $\gamma(H)$, i.e.,
$$\gamma_{l_i}(H)=\frac{H_{ii}}{G_{ii}},$$
and define the set
\begin{equation} \label{Gamma}
\begin{array}{ll}
\displaystyle \Gamma & \displaystyle \!\!\!\! =  \, \{i: n-q_1+1 \le l_i \le n \mbox{ and } \gamma_{l_i}(H) > \mu_\gamma\} \\
                                   & \displaystyle \!\!\!\!  \cup  \;\;  \{i: 1 \le l_i \le q_2 \mbox{ and } \gamma_{l_i}(H) < \nu_\gamma\} ,
\end{array}
\end{equation}
where $\mu_\gamma\ge 1$, $\nu_\gamma \in (0,1]$, and $q_1$ and $q_2$ are nonnegative
integers such that $q_1+q_2=q$. Then we define the diagonal matrix
$K$ by setting
\begin{equation}\label{phi}
K_{ii}=
\left\{
\begin{array}{ll}
  G^{-1}_{ii}-H^{-1}_{ii},   & \mbox{ if } i \in \Gamma ,\\
  0,   & \mbox{ otherwise},
\end{array}\right.
\end{equation}
and choose $\bar K$ as the principal submatrix of $K$ having as diagonal entries the values $K_{ii}$ with index $i \in \Gamma$.
Finally, we define $\bar A$ as the matrix consisting of the columns of $A$ corresponding to the indices in $\Gamma$, ordered as
they are in $A$.


The matrix $\sup$ is positive definite and $\sup-\ss$ is a low-rank matrix
if the cardinality of $\Gamma$ is small, i.e., $q \ll n$.  In this case the Cholesky-like factorization of $\sup$ 
can be conveniently computed by updating or downdating the factorization $LDL^T$ of $\ss$.
Specifically, an update must be performed if $H_{ii} > G_{ii}$, and a downdate if $H_{ii} < G_{ii}$.
This task can be accomplished by either using efficient procedures for updating
and downdating the Cholesky factorization \cite{dh}, 
or by the Shermann-Morrison-Woodbury  formula (see, e.g., \cite[Section~2.1.3]{gv}).
Clearly, once $\sup$ has been factorized, the factorization of $\pup$ is readily available.

The procedure described so far is summarized in Algorithm~\ref{LR} and is called {\em lr\_update}.
It takes in input  the matrices $H$,   $G$ and $A$, the factors $L$ and $D$ of $\ss$,  and the
scalars $q_1$, $q_2$, $\mu_\gamma$, and $\nu_\gamma$, and returns
in output the factors $L_{upd}$ and $D_{upd}$ of $\sup$. 
For simplicity, we assume that the first set on the right-hand side of
(\ref{Gamma})  contains $q_1$ elements and the second set contains $q_2$ elements.
Note that we borrow the Matlab notation.

\begin{algorithm}[t!]
\caption{(Building $S_{upd}$ via low-rank update)\label{LR}}
\begin{minipage}[c]{12.7cm}
\vskip 5pt
\ [$L_{upd}$,$D_{upd}$] =
{\bf {\em lr\_update}} ($H$, $G$, A, $L$, $D$, $q_1$, $q_2$, $\mu_\gamma$, $\nu_\gamma$)
\begin{algorithmic} [1]
\vskip 5pt
\State Build the vector $\gamma(H)$, containing the diagonal entries of $HG^{-1}$ sorted
in nondecreasing order, and the  vector $l$, containing the positions of the diagonal entries
of $HG^{-1}$ in $\gamma(H)$.
\vskip 5pt
\State Build the set $\Gamma$ in (\ref{Gamma}) and the vector $C_\Gamma \in \IR^q$ that contains
the indices $i \in \Gamma$ in increasing order.
\vskip 5pt
\State Build the matrix $\bar K$ by defining its diagonal entries as
$$
\bar K_{ss}=G_{jj}^{-1}-H_{jj}^{-1}, \;\; j=C_\Gamma(s),  \;\; s=1,\ldots,q,
$$
and set $\bar A=A(:,C_\Gamma)$.
\vskip 5pt
\State Compute the factorization $L_{upd} D_{upd} L_{upd}^T$ of
$\sup = \ss + \bar A \bar K \bar A^T$ by updating/downdating
the factors $L$ and $D$. 
\end{algorithmic}
\end{minipage}

\vskip 5pt
\end{algorithm}

The previous choice of $K$ implies that the matrix $J$ has the following diagonal entries:
 $$
J_{ii}=\left\{
\begin{array}{ll}
G_{ii},  \  &\mbox { if } \   i\in \Gamma, \\
H_{ii},  \   &\mbox{ otherwise},
\end{array}\right.
$$
and hence
\begin{eqnarray}
\gamma_1(J)&=&\min\{1, \min_{i\notin \Gamma}\gamma_{l_i}(H) \}= \min\{1, \gamma_{q_2+1}(H)\}, \label{min_g_lr} \\
\gamma_n(J)&=&\max\{1, \max_{i\notin \Gamma}\gamma_{l_i}(H) \}=\max\{1, \gamma_{n-q_1}(H)\}. \label{max_g_lr}
\end{eqnarray}
Then, from Corollary~\ref{corYZtdue0} it follows that, among the possible choices of $J$,
the one considered here is expected to provide good eigenvalue bounds as long as 
$\gamma_{q_2}(H)$ and $\gamma_{n-q_1+1}(H)$ are well separated 
from $\gamma_{q_2+1}(H)$ and $\gamma_{n-q_1}(H)$, respectively.
We observe that by setting $\mu_\gamma=\nu_\gamma=1$ we allow to include in $\Gamma$ indices
corresponding to values $\gamma_{l_i}(H)$ close to 1, which may not change much the bounds
provided by~(\ref{boundY_gamma}) and by (\ref{boundRe})--(\ref{bound_y0}) with (\ref{maxl_Z})--(\ref{minl_Z}). Therefore, the use of larger (smaller) values for
$\mu_\gamma$ ($\nu_\gamma$) may be effective anyway, while saving computational
cost (see Section~\ref{sec:experiments} for further details).
Furthermore, a consequence of this low-rank correction strategy is that $S-\sup$ has $q$ zero eigenvalues;
thus $\pin^{-1}\ak$ has $2q$ unit eigenvalues with geometric multiplicity $q$ \cite[Theorem 3.3]{ss}.

We note that in  the limit case $q=0$ the set $\Gamma$ is empty; 
hence $\sup=\ss$ and 
$$
\gamma_{l_i}(H)  
= \frac{Q_{ii} +(\tunos)_{ii}}{Q_{ii}+\tunok_{ii}}.
$$
The element $\gamma_{l_i}(H)$ is expected to be close to $1$ if $(\tunok)_{ii} - (\tunos)_{ii}$ 
is small, while it may significantly differ from $1$ 
if $\tunok_{ii}$ tends to zero or infinity, as it happens when
the IP iterate approaches an optimal solution where strict complementarity holds.

\begin{figure}[t]
\begin{center}
\hspace*{-5mm}
  \includegraphics[width=.48\textwidth]{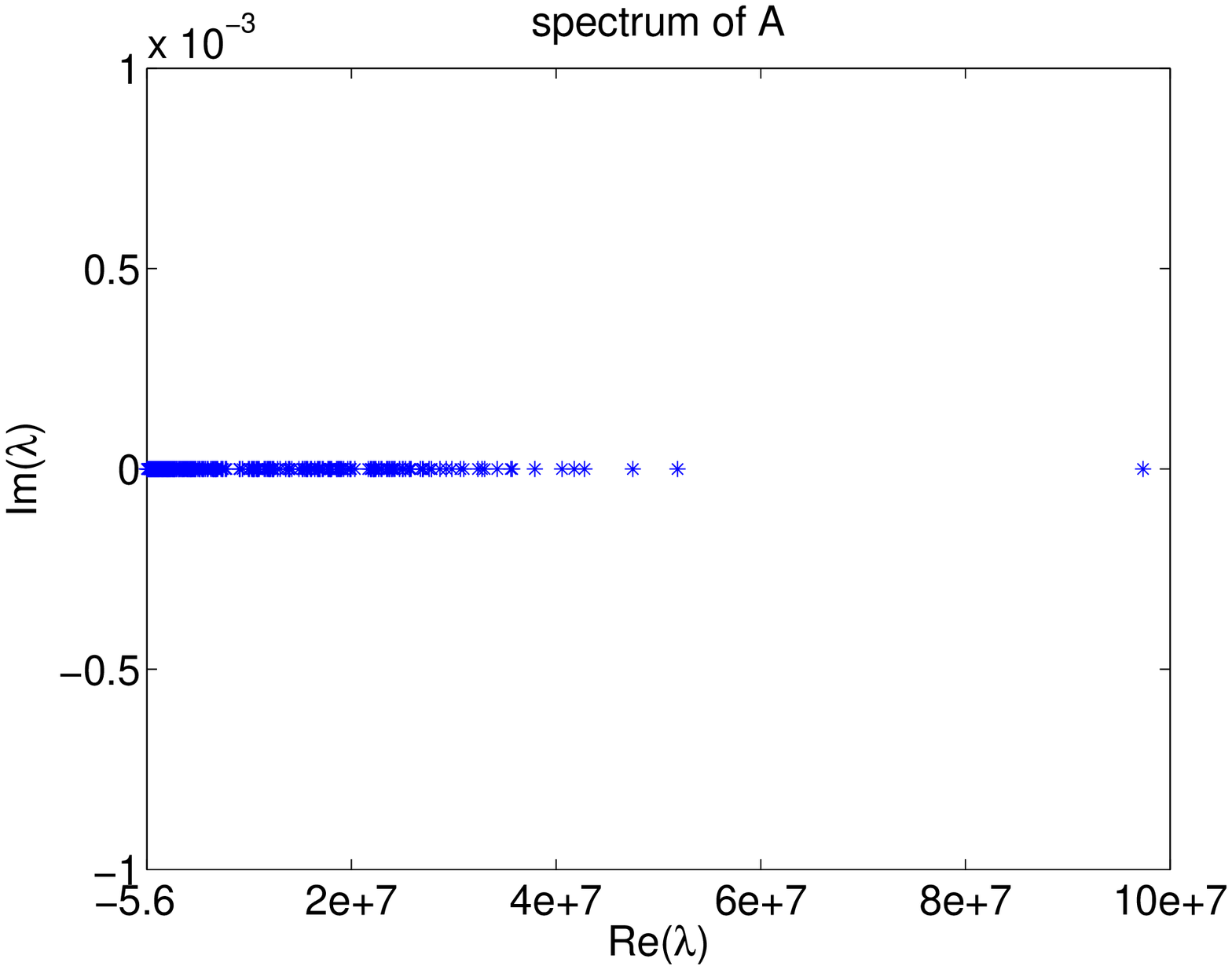}       
  \includegraphics[width=.48\textwidth]{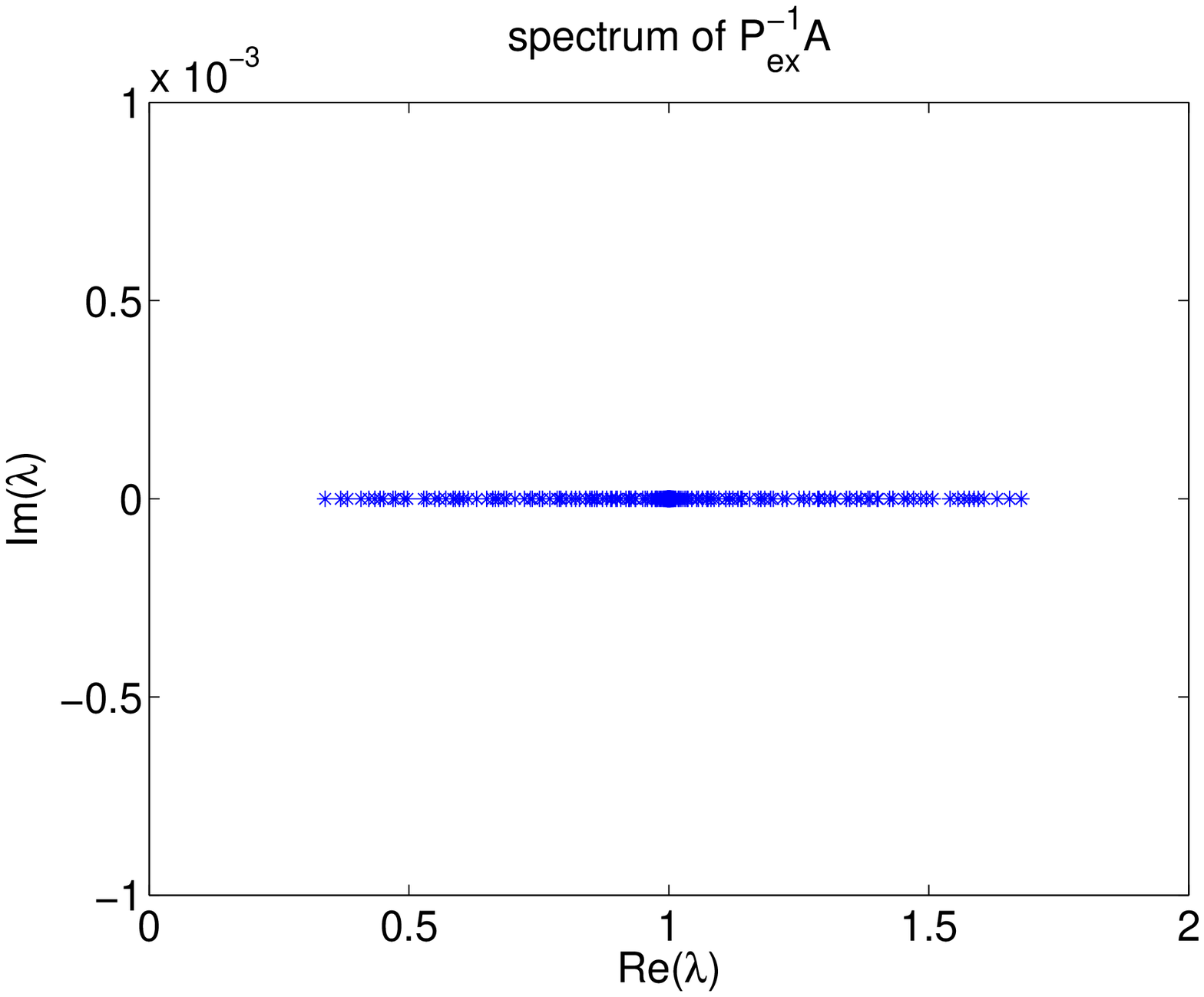} \\[1mm]
\includegraphics[width=.48\textwidth]{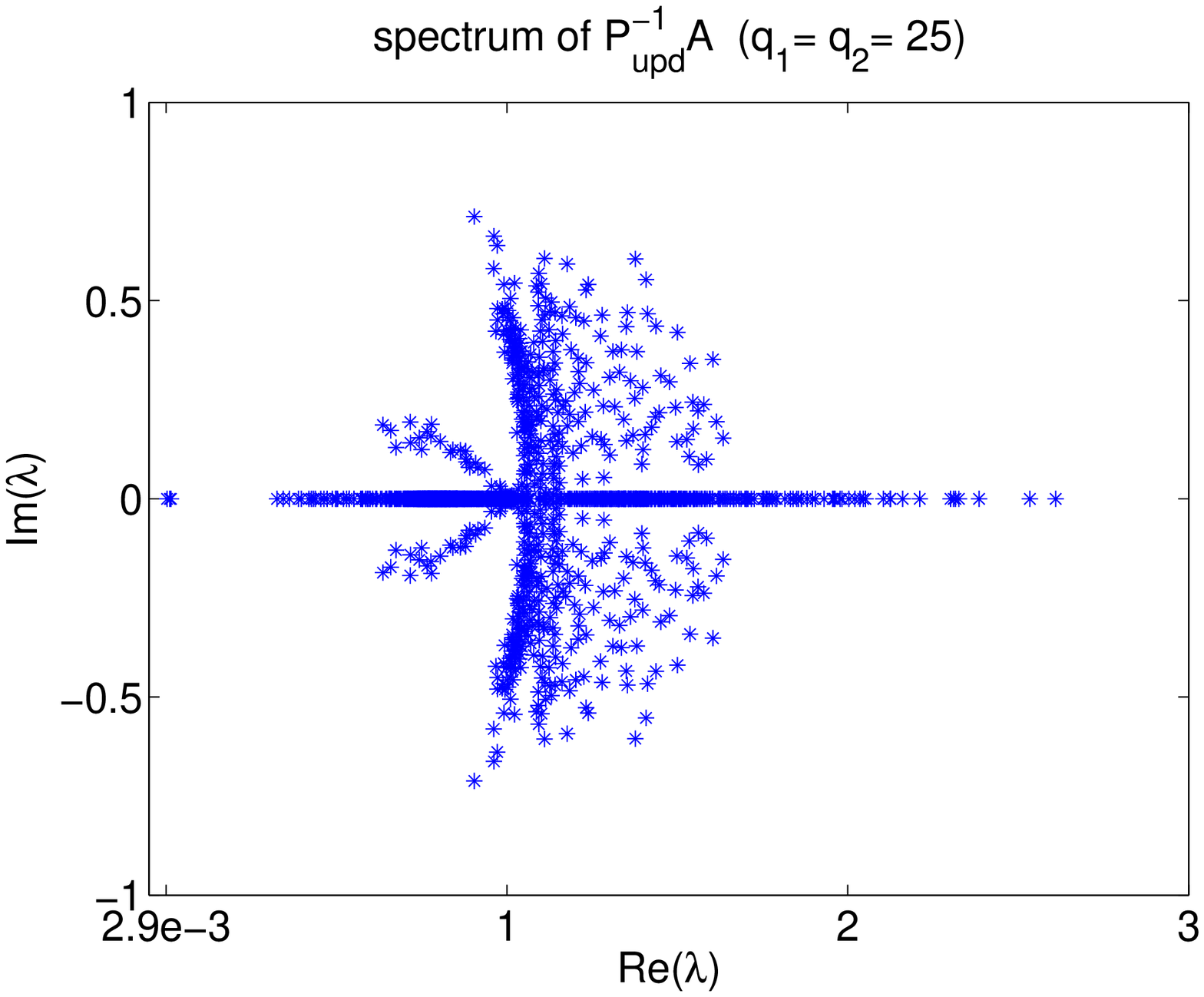}
\end{center}
\caption{Spectra of the matrices $\a$, $\pex^{-1}\a$, and $\pup^{-1}\a$ with $q_1=q_2=25$,
at the 10th iteration of an IP method applied to problem CVXQP1 ($n=1000$, $m=500$). The updated
preconditioner is built from the Schur complement of the exact preconditioner at the 6th IP iteration.
\label{spectra}}
\end{figure}

We conclude this section showing the spectra of the matrices $\a$, $\pex^{-1}\a$ and
$\pup^{-1}\a$, where $\a$ has been obtained by applying an IP solver to problem CVXQP1
from the CUTEst collection \cite{got2}, with dimensions $n=1000$ and $m=500$
(see Figure~\ref{spectra}). The IP solver, described in Section~\ref{sec:experiments},
has been run using $\pex$, and $\a$ is the KKT matrix at the 10th IP iteration.
The updated preconditioner $\pup$ has been built by updating the matrix $\as$
obtained at the 6th IP iteration, using $q_1=q_2=25$. In this case
$\gamma_{q_2+1} = 1.85$e-3 and $\gamma_{n-q_1} = 2.73$e+0.
We see that, unlike $\pex$, $\pup$ moves some eigenvalues from the real to
the complex field. Nevertheless, $\pup$ tends to cluster the eigenvalues of $\a$ around 1, and
$\gamma_{q_2+1}$ and $\gamma_{n-q_1}$ provide approximate bounds on the real and imaginary
parts of the eigenvalues of the preconditioned matrix, according to
(\ref{boundY_gamma})--(\ref{minl_Z}) and (\ref{min_g_lr})--(\ref{max_g_lr}).
Of course, this clustering is less effective than the one performed by $\pex$, but it is useful
in several cases, as shown in Section~\ref{sec:experiments}.

\subsection{Updated  preconditioners for $\tduek\neq0$}

The updating strategy described in the previous section can be generalized to the case $\tduek \ne 0$.
To this end, we note that the sparsity pattern of  $\Theta^{(2)}$ does not change   
throughout the IP iterations and the set   ${\mathcal L}=\{i \; : \; \Theta^{(2)}_{ii}\ne 0\}$
has cardinality equal to the number $m_1$ of linear inequality constraints in the QP problem. Let 
$\tilde \Theta_{seed}^{(2)}$ and $\tilde \Theta^{(2)}$  be the
$m_1 \times m_1$ diagonal submatrices containing the nonzero diagonal entries of
$\Theta_{seed}^{(2)}$ and $\Theta^{(2)}$, respectively, and let $\tilde I_m$ be the rectangular matrix
consisting of the columns of $I_m$  with indices in $ {\mathcal L}$.  
Then, we have
\begin{eqnarray*}
\ss &=  AH^{-1}A^T +\Theta_{seed}^{(2)} &= \tilde A \tilde H^{-1} \tilde A^T,\\
S &=  AG^{-1}A^T +\Theta^{(2)} &=  \tilde A \tilde G^{-1} \tilde A^T,
\end{eqnarray*}
where 
$$
\tilde A   =   \left[\begin{array}{ll}
  A & \tilde I_m
\end{array} \right] , \quad \tilde H^{-1} =   \left[\begin{array}{ll}
  H^{-1}  & 0\\
  0 & \tilde \Theta_{seed}^{(2)}
\end{array} \right], \quad 
\tilde G^{-1} = 
\left[\begin{array}{ll}
  G^{-1}  & 0\\
  0 & \tilde \Theta^{(2)}
\end{array} \right].
$$
%
%
Analogously, letting
\begin{equation}\label{sin2}
S_{upd}=AJ^{-1}A^T +\Theta_{upd}^{(2)} , 
\end{equation}
we have 
$$
\sup= \tilde A \tilde J^{-1}\tilde A^T, \quad \mbox{ where } \quad 
\tilde J^{-1}= \left[\begin{array}{ll}
  J^{-1}  & 0\\
  0 & \tilde \Theta^{(2)}_{upd}
\end{array} \right],
$$
and $\tilde \Theta^{(2)}_{upd}$ is the $m_1 \times m_1$ diagonal submatrix of
$\Theta_{upd}^{(2)}$ containing its nonzero diagonal entries. 
Thus,  we can choose $\tilde J$ using the same arguments as in the previous section.
With a little abuse of notation,
let $\gamma(\tilde J)=(\gamma_1(\tilde J), \ldots, \gamma_{n+m_1}(\tilde J))$ 
be the vector with elements equal to the diagonal entries
of $\tilde J \tilde G^{-1}$ sorted in nondecreasing order.
Then, by Lemma~\ref{gammabsz}, the eigenvalues of  $S_{upd}^{-1} S $ satisfy
\begin{equation}\label{bound_gamma_tilde}
\gamma_1(\tilde J) \le \lambda (S_{upd}^{-1} S) \le \gamma_{n+m_1}(\tilde J),
\end{equation}
and the following result holds.

\begin{corollary}\label{corYZtduenonzero}
Let $\a$,  $\pup$ and $\sup$ be  the matrices in (\ref{a}), (\ref{pupdated}) and (\ref{sin2}),
respectively, and $\lambda$ an eigenvalue of $\pup^{-1}\a$.
Let $\gamma_1(\tilde J)$ and $\gamma_{n+m_1}(\tilde J)$ be
the smallest and the largest element of $\gamma(\tilde J)$ and let $X$ be the matrix in (\ref{X}).
If $2I_n-X$ is positive definite, then  (\ref{boundRe})--(\ref{bound_y0}) hold with
\begin{eqnarray}
\bar{\lambda} & \le & \gamma_{n+m_1}(\tilde J)\, \max\{  2-\lambda_{\min} (X) ,\, 1 \} \label{maxl_Z_C},\\
\underline{\lambda} & \ge & \gamma_1(\tilde J) \,  \min\{  2-\lambda_{\max} (X) ,\, 1 \}\label{minl_Z_C}.
\end{eqnarray}
Furthermore,
\begin{equation}\label{boundY_gamma_2}
|\mathcal{I}(\lambda)|\le \sqrt{\gamma_{n+m_1}(\tilde J)} \, \|I_n-X\| .
\end{equation}
\end{corollary}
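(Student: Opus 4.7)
The plan is to follow the same template as the proof of Corollary~\ref{corYZtdue0}, but in the enlarged space where $S$ and $\sup$ are expressed via $\tilde A$, $\tilde G$ and $\tilde J$. The only serious new ingredient is that the imaginary–part bound~(\ref{boundY}) involves $AG^{-1}A^{T}$ rather than $S$, so we must account for the extra summand $\tduek$ there.

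First, I would verify the hypotheses needed to invoke Lemma~\ref{gammabsz}. The matrix $\tilde A=[A,\tilde I_m]$ is full row rank since $A$ already is, and both $\tilde J^{-1}$ and $\tilde G^{-1}$ are diagonal positive definite. Applying Lemma~\ref{gammabsz} with $B=\tilde A$, $E=\tilde J^{-1}$, $F=\tilde G^{-1}$ yields
\begin{equation*}
\lambda_{\min}(\tilde J\,\tilde G^{-1})\;\le\;\lambda\bigl(\sup^{-1}S\bigr)\;\le\;\lambda_{\max}(\tilde J\,\tilde G^{-1}).
\end{equation*}
Because $\tilde J\tilde G^{-1}$ is diagonal, its eigenvalues are precisely the entries of $\gamma(\tilde J)$, which gives~(\ref{bound_gamma_tilde}). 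In particular $\lambda_{\min}(\sup^{-1}S)\ge\gamma_1(\tilde J)$ and $\lambda_{\max}(\sup^{-1}S)\le\gamma_{n+m_1}(\tilde J)$.

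Next I would apply Theorem~\ref{eigZ_Cn0} to the preconditioned matrix $\pup^{-1}\a$ in the case $\tduek\neq0$. The hypothesis that $2I_n-X$ is positive definite is exactly the assumption carried over from the corollary. Formulas~(\ref{maxl_Z_gen_C}) and~(\ref{minl_Z_gen_C}) then yield $\bar\lambda$ and $\underline{\lambda}$ in terms of $\lambda_{\max}(\sup^{-1}S)$ and $\lambda_{\min}(\sup^{-1}S)$; substituting the bounds from the previous step produces~(\ref{maxl_Z_C}) and~(\ref{minl_Z_C}), from which~(\ref{boundRe})--(\ref{bound_y0}) follow via Theorem~\ref{eigZ_Cn0}.

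The remaining and slightly trickier piece is the imaginary–part inequality~(\ref{boundY_gamma_2}). Theorem~\ref{eigZ_Cn0} gives
\begin{equation*}
|\mathcal{I}(\lambda)|\le\sqrt{\lambda_{\max}\bigl(\sup^{-1}AG^{-1}A^{T}\bigr)}\;\|I_n-X\|,
\end{equation*}
so I need $\lambda_{\max}(\sup^{-1}AG^{-1}A^{T})\le\gamma_{n+m_1}(\tilde J)$. Here the estimate from Lemma~\ref{gammabsz} does not apply verbatim because $AG^{-1}A^{T}$ is missing the $\tduek$ term. The key observation is that $\tduek\succeq0$ implies the Loewner ordering $AG^{-1}A^{T}\preceq S$. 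Conjugating by $\sup^{-1/2}$ and using the similarity of $\sup^{-1}M$ with $\sup^{-1/2}M\sup^{-1/2}$ for any symmetric $M$, I conclude
\begin{equation*}
\lambda_{\max}\bigl(\sup^{-1}AG^{-1}A^{T}\bigr)\;\le\;\lambda_{\max}\bigl(\sup^{-1}S\bigr)\;\le\;\gamma_{n+m_1}(\tilde J),
\end{equation*}
and~(\ref{boundY_gamma_2}) follows. The main obstacle I anticipate is precisely this last step: justifying that the $\tduek$ contribution can be absorbed into the $\gamma(\tilde J)$ bound cleanly, which is resolved by the Loewner-monotonicity argument above.
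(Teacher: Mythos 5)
Your proposal is correct and follows essentially the same route as the paper: the real-part bounds come from combining Theorem~\ref{eigZ_Cn0} with the estimate $\gamma_1(\tilde J)\le\lambda(\sup^{-1}S)\le\gamma_{n+m_1}(\tilde J)$ obtained from Lemma~\ref{gammabsz} applied to $\tilde A$, $\tilde J^{-1}$, $\tilde G^{-1}$, and the imaginary-part bound is handled exactly as in the paper by the Loewner-ordering argument $AG^{-1}A^T\preceq S$ (via the quadratic form conjugated by $\sup^{-1/2}$ and similarity), giving $\lambda_{\max}(\sup^{-1}AG^{-1}A^T)\le\lambda_{\max}(\sup^{-1}S)$. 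The step you flagged as the main obstacle is precisely the one the paper resolves the same way, so there is nothing to add.
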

\begin{proof}
Inequalities (\ref{maxl_Z_C}) and (\ref{minl_Z_C}) follow directly from Theorem \ref{eigZ_Cn0} and
(\ref{bound_gamma_tilde}).
Since $\tduek$ is  positive semidefinite, for any vector $w\in \mathbb{R}^n$ we have
$$
w^T \sup^{-\frac{1}{2}} AG^{-1}A^T \sup^{-\frac{1}{2}} w 
\le  w^T \sup^{-\frac{1}{2}} (AG^{-1}A^T +\tduek) \sup w.
$$
Then, by matrix similarity,
\begin{equation}\label{lmaxY}
\lambda_{\max}(\sup^{-1}AG^{-1}A^T)  \le \lambda_{\max}(\sup^{-1} S)
\end{equation}
and (\ref{boundY_gamma_2}) follows by  using Theorem  \ref{eigZ_Cn0} and (\ref{bound_gamma_tilde}).
\end{proof}
\vskip 5pt

On the basis of the previous results, the generalization of the updating procedure to the case
$\tduek\neq 0$ is straightforward. If $S_{seed}=LDL^T$, the factorization of
$\pup$ can be computed by  invoking procedure {\em lr\_update} with input data $\tilde H$,
$\tilde G$ and $\tilde A$ in place of $H$,  $G$ and $A$.

\section{Numerical results\label{sec:experiments}}

We tested the effectiveness of our updating procedure by solving sequences of KKT systems
arising in the solution of convex QP problems where $m \le n$ and $A$ is full rank.

To this end, we implemented {\em lr\_update} within PRQP, a Fortran~90 solver
for convex QP problems based on a primal-dual inexact Potential Reduction IP
method \cite{cddd_coap1, cdddt, ddd}.
For comparison purpose, we implemented the preconditioner $\pex$ too,
in the form specified in (\ref{pexfatt}). We used the CHOLMOD
library \cite{dh} to compute the sparse $LDL^T$ factorization of $\ss$
and $S$, and to perform the low-rank updates and downdates required by $\sup$.
For the solution of the KKT systems, we developed an implementation of
the left-preconditioned SQMR method without look-ahead \cite{fn},
taking into account the block structure of the system matrices and of the exact and
updated preconditioners. All the new code was written in Fortran 90, with interfaces to
the functions of CHOLMOD, written in C.
We note that only one matrix-vector product per iteration is
performed in our SQMR implementation, except in the last few iterations, where an additional
matrix-vector product per iteration is computed to use the residual instead of the preconditioned
BCG-residual in the stopping criterion, as in the QMRPACK code \cite{fn_qmrpack}.
This keeps the computational cost per iteration comparable with that of the
Conjugate Gradient method. We also observe that, although no theoretical convergence
estimates are available for SQMR, this method has shown good performance in all our
experiments. 

PRQP was run on several test problems, either taken from the CUTEst collection~\cite{got2}
or obtained by modifying CUTEst problems, as explained later in this section.
The starting point was chosen as explained in \cite{ddd2} and the IP iterations
were stopped when the relative duality gap and suitable measures of the
primal and dual infeasibilities became lower than $10^{-7}$ and $10^{-8}$,
respectively (see \cite{cddd_coap2}
for the details). The zero vector was used as starting guess in SQMR.
An adaptive criterion was applied to stop the iterations~\cite{cddd_coap2},
which relates the accuracy in the solution of the KKT system to the quality of the current
IP iterate, in the spirit of inexact IP methods~\cite{b_jota}.
A maximum number of 1000 SQMR iterations was considered too, but it was never
reached in our runs.

Concerning the choice of $q$, some comments are in order. 
As the value of $q$ increases, the updated preconditioner is expected to improve
its effectiveness, reducing the number of linear iterations. On the other hand, its
computational cost is also expected to increase, because of the growing
cost of the low-rank modification. In order to reduce the time for the solution
of the overall KKT sequence, the updating strategy must realize a tradeoff between effectiveness
and cost, and our experience has shown that $q$ must be much smaller than the dimension
of the Schur complement. On the basis of these considerations, we also think that
heuristic rules for choosing $q$ dynamically, such as the one
proposed in \cite{wol}, may have limited impact of our procedure because only a 
small range of values of $q$ is affordable. However, we postpone a systematic study
of this issue to future work. 
In the experiments we set $q=50,100$ and $q_1=q_2=q/2$, which is much smaller
than the dimension of the Schur complement in our test problems.
We also considered ``the limit case'' $q=0$, corresponding to $\sup = \ss$
in the updated preconditioner $\pup$. Preliminary experiments with larger values
of $q$, i.e., $q=150,200$, did not lead to any performance improvement and
therefore we decided to discard these values. Furthermore, since we had not
obtained practical benefits by including in $\Gamma$ indices corresponding
to values of $\gamma_{l_i}(H)$ close to~1, we set $\mu_\gamma = 10$ and
$\nu_\gamma=0.1$. When the number of elements $\gamma_{l_i}(H) > \mu_\gamma$
or the number of elements $\gamma_{l_i}(H) < \nu_\gamma$ was
less than $q/2$, we chose $q_1$ and $q_2$ to get the largest possible value
of $q_1+q_2$. The same comments hold for $\gamma_{l_i}(\tilde H)$.
For simplicity, in the following the notation $\gamma_{l_i}(H)$ is used also
to indicate $\gamma_{l_i}(\tilde H)$, as it will be clear from the context.

On the basis of numerical experiments, we decided to refresh the preconditioner,
i.e., to build $\pex$ instead of $\pup$, when the time for computing $\pup$
and solving the linear system exceeded 90\% of the time for building the last exact
preconditioner and solving the corresponding system.
When for a specific system of the sequence this situation occured, the next system
of the sequence was solved using the preconditioner $\pex$. We also set a maximum
number, $k_{max}$, of consecutive preconditioner updates, after which the refresh
was performed anyway. This strategy aims at avoiding possible situations in which
the time saved by updating the Schur complement, instead of re-factorizing it,
is offset by an excessive increase in the number of SQMR iterations, due to
deterioration of the quality of the preconditioner.
In the experiments discussed here $k_{max}=5$ was used for all the test problems.

We observe that we did not apply any scaling to the matrix 
$X$ in (\ref{X}), although our
supporting theory lies on the assumption that the eigenvalues of $X$ are smaller
than 2. Nevertheless, the results generally appear to be
in agreement with the theory.
It is also worth noting that since each KKT system is built from the approximation of
the optimal solution computed at the previous IP iteration, different preconditioners
produce changes in the sequence of KKT systems. On the other hand, testing our
updating technique inside an IP solver allows to better evaluate its impact
on the performance of the overall optimization method, providing a more
general evaluation of our approach. In our experiments
we did not observe strong differences in the behaviour of the IP method
by using $\pex$ or $\pup$ with different values of $q$, and the number
of IP iterations was generally unaffected by the choice of the preconditioner
(a small variation of the IP iterations was observed in few cases,
as shown in the tables reported in the next pages). 

We performed the numerical experiments on an Intel Core 2 Duo E7300 processor
with clock frequency of 2.66 GHz, 4 GB of RAM and 3 MB of cache memory,
running Debian GNU/Linux 6.0.7 (kernel version 2.6.32-5-amd64).
All the software was compiled using the the GNU C and Fortran compilers (version 4.4.5).

Since the performance of the updating strategy was expected to depend
on the cost of the factorization of the Schur complement, we selected
test problems requiring different factorization costs.
As a first test set, we considered some problems
taken from the CUTEst collection \cite{got2} or obtained
by modifying problems available in this collection, as explained next.
Most of the large CUTEst convex QP problems with inequality constraints
(corresponding to $\tduek \ne 0$) were not useful for our experiments,
because of the extremely low cost of the factorization of their Schur complements.
Therefore, we also modified two CUTEst QP problems with linear constraints $Ax=b$
and non-negligible factorization costs, by changing $Ax=b$ into $Ax \ge b$. 
The problems of this test set are listed in Table~\ref{table1},
along with their dimensions and the number of nonzero entries of their Schur complements.
The modified problems are identified by appending ``-M'' to their original names.
In the first three problems $\tduek = 0$, while in the remaining ones $\tduek \ne 0$. 
For all the problems the Schur complements are very sparse; furthermore, they are 
banded for STCQP2, MOSARQP1 and QPBAND (diagonal for the latter problem).

\setlength{\tabcolsep}{5pt}

\begin{sidewaystable}[p!]
\begin{center}
{\small
\begin{tabular}{|c|c|ccc|ccc|ccc|ccc|}
\hline
& &
\multicolumn{3}{|c|}{$\pex$} & 
\multicolumn{3}{|c|}{$\pup$ ($q=0$)}  &
\multicolumn{3}{|c|}{$\pup$ ($q$=50)} &
\multicolumn{3}{|c|}{ \ $\pup$ ($q$=100) \ }\\
\cline{3-14}
{\em Problem} & $\begin{array}{c}  n \\ m \\ nnz(S) \\[1mm] \end{array}$
                       & $I\!Pits$ & $its$ &  $time$ & $I\!Pits$ & $its$ & $time$ & $I\!Pits$ & $its$ & $time$ & $I\!Pits$ & $its$ & $time$ \\
\hline
CVXQP1 & $\begin{array}{cc} 20000 \\ 10000 \\67976\end{array}$ 
& 16  & 209 & 2.07e+0 & 16 &  298 & 2.35e+0 & 16 &   335 &   2.55e+0   & 16 & 323 & 2.49e+0 \\ \hline
CVXQP3 &  $\begin{array}{cc} 20000 \\ 15000 \\ 155942\end{array}$ 
             & 35 & 523 & 8.04e+0 &  35 & 800 & 9.32e+0 & 35  & 755 &   8.86e+0   &  35 & 757  & 8.90e+0 \\ \hline
STCQP2 & $\begin{array}{cc} 16385 \\ 8190 \\ 114660 \end{array}$
             &12  &226  & 1.46e+0  &  12 &  235  & 1.41e+0  &  12 &  235 & 1.43e+0  &   12 & 235 & 1.43e+0 \\ \hline
CVXQP1-M & $\begin{array}{cc} 20000 \\ 10000 \\ 67976\end{array}$ 
            &26   & 1015 &  7.65e+0 & 29 & 1562 & 1.07e+1 & 26 &  1812 &    1.21e+1    & 26 &  1845 & 1.22e+1 \\ \hline
CVXQP3-M &  $\begin{array}{cc} 15000 \\ 11250 \\ 155942\end{array}$ 
            & 30 & 1261 & 1.47e+1 & 30 & 1654 & 1.71e+1 & 30 & 2073 &   2.11e+1   & 30 &  2135 & 2.18e+1 \\ \hline
MOSARQP1 & $\begin{array}{cc} 22500 \\ 20000  \\   257166\end{array}$
             &  16 & 66  &    4.65e+0  & 16 &  193  &   4.53e+0   & 16 &  189 & 4.83e+0  & 16 &   215 & 5.21e+0 \\ \hline			
QPBAND & $\begin{array}{cc}50000  \\  25000 \\  25000 \end{array}$ 
            & 12 & 757	& 7.13e+0	& 12 & 1596	&	1.37e+1	&  12 & 1600	&	1.43e+1 &	12 &  1599	&1.37e+1\\ \hline
\end{tabular}
}
\medskip
\caption{Comparison between $\pex$ and $\pup$ on the first set of test problems.\label{table1}}
\end{center}

\end{sidewaystable}

A comparison among the exact and updated preconditioners on this set of problems
is presented in Table~\ref{table1}. For each preconditioner we report the total number of
PRQP iterations ($I\!Pits$), the total number of SQMR iterations ($its$) and the
overall computation time, in seconds, needed to solve the KKT sequence. 
The results shows that using the updating strategy is not beneficial on these problems.
Since the exact factorization of the Schur complement is not significantly more expensive
than SQMR, the time saved by applying the updating strategy is not enough to offset
the time required by the larger number of SQMR iterations
resulting from the use of an approximate CP. Nevertheless, for STCQP2 and MOSARQP1,
the updating strategy shows the same performance as the exact preconditioner.
We also see that in many cases
the number of SQMR iterations increases with $q$, which seems contrary to
our expectations. This behaviour has been explained with a deeper analysis of the
execution of PRQP on the selected test problems. 
The convergence histories of the IP method enlighten that the IP iterations
at which the refresh takes place vary with $q$; this does not allow a fair comparison
among the updating rules using different values of the parameter $q$. This behaviour
is ascribed to the fact that the computation of the exact preconditioner is not expensive
and therefore the refresh strategy is very sensitive to the choice of $q$.
In particular, in our experiments the choice $q=0$ often leads to recomputing
the exact preconditioners before the number of SQMR iterations increases too much,
thus reducing the iteration count with respect to larger values of $q$.
We also note that in some cases the number of
SQMR iterations is practically constant as $q$ varies, because either the number of
elements $\gamma_{l_i}(H) \not \in  [\nu_\gamma, \mu_\gamma]$
is much smaller than $q$, or the values $\gamma_{l_i}(H)$ excluded by the updating
strategy are not well separated from $\gamma_{q_2}(H)$ and $\gamma_{n-q_1+1}(H)$.

Despite these first unfavourable results, since the updating strategy does not
excessively increase the number of SQMR iterations,
we can still expect a significant time reduction
on problems with Schur complements requiring large factorization times.
In order to investigate this issue, we built a second set of test problems with less sparse
Schur complements, by modifying the problems in Table~\ref{table1} as follows.
In problems CVXQP1, CVXQP1-M, CVXQP3 and CVXQP3-M, we added four nonzero
entries per row in the matrix $A$, while in problem QPBAND we added two nonzero
entries per row.
In problem  MOSARQP1, we introduced nonzeros in the positions $(i,n)$ of the constraint matrix $A$,
where $i$ is such that $mod(i,10)=1$. These new problems are identified by appending
``-D'' to the names of the problems they come from, as listed in Table~\ref{table2}
(``D'' stands for ``denser'').
Finally, starting from problems CVXQP3 and CVXQP3-M we generated two further problems,
named CVXQP3-D2 and CVXQP3-M-D2, respectively. They were obtained by adding only
one nonzero entry per row in the matrix $A$. The densities of the resulting Schur complements are between
the Schur complements densities of the corresponding original and -D versions. 

\begin{sidewaystable}[p!]
\begin{center}
{\small
\begin{tabular}{|c|c|ccc|ccc|ccc|ccc|}
\hline
 & & 
\multicolumn{3}{|c|}{$\pex$} & 
\multicolumn{3}{|c|}{$\pup$ ($q=0$)}  &
\multicolumn{3}{|c|}{$\pup$ ($q$=50)} &
\multicolumn{3}{|c|}{ \ $\pup$ ($q$=100) \ }\\
\cline{3-14}
{\em Problem} & $\begin{array}{c}  n \\ m \\ nnz(S) \\[1mm] \end{array} $ 
             & $I\!Pits$ & $its$ &  $time$ & $I\!Pits$ & $its$ & $time$ & $I\!Pits$  & $its$ & $time$ &$I\!Pits$  & $its$ & $time$ \\
\hline
CVXQP1-D & $\begin{array}{cc} 20000 \\ 10000 \\ 240494 \end{array} $
 & 15 &  239 & 2.95e+2 & 15 & 759 & 9.83e+1 & 15 & 616 &   9.91e+1   & 15 &  602 & 1.03e+2 \\ \hline
CVXQP3-D &  $\begin{array}{cc} 20000 \\ 15000 \\ 542296 \end{array}$ 
            &15   &  192 & 1.03e+3 &  15 & 778 & 3.30e+2 & 15 &  526 &   4.40e+2   &  15 & 481 & 4.55e+2 \\ \hline
CVXQP3-D2 &  $\begin{array}{cc} 20000 \\ 15000\\ 224396 \end{array}$ &
            15   & 288 & 9.95e+1 &  18 & 1009 & 6.02e+1 & 17 &  819 &   5.26e+1   &  17 & 802 & 5.46e+1 \\ \hline
STCQP2-D & $\begin{array}{cc} 16385 \\ 8190 \\ 5003908 \end{array}$   
          & 12   &238  & {  6.08e+2}  & 12 & 262  & 1.22e+2  & 12 &   262 & 1.22+2  & 12 &  262 & 1.22e+2 \\ \hline
CVXQP1-M-D & $\begin{array}{cc} 20000 \\ 10000 \\ 240494 \end{array}$ 
         &28   & 1090& 5.85e+2 & 28& 4704 & 3.63e+2 & 28 & 3665 &    3.23e+2    & 27 & 3514 & 3.24e+2 \\ \hline
CVXQP3-M-D &  $\begin{array}{cc} 20000 \\ 15000 \\ 542296\end{array}$ 
      &25        & 910 & 1.93e+3 & 25 & 3605 & 9.08e+2 & 25 & 3416 &   8.89e+2   & 25 &  3317 & 9.07e+2 \\ \hline
CVXQP3-M-D2&  $\begin{array}{cc} 20000 \\ 15000 \\224396\end{array}$ 
      &25         & 822 & 1.66e+2 & 25 & 2782 & 1.32e+2 & 25 & 2645 &   1.33e+2   & 25 &  2148& 1.25e+2 \\ \hline
MOSARQP1-D & $\begin{array}{cc} 22500 \\ 20000 \\ 573216 \end{array}$ 
   &24          & 93  &    4.94e+1  & 23 &  881  &   3.47e+1   & 22&  599 & 3.00e+1  & 22&  440 & 2.78e+1 \\ \hline
QPBAND-D & $\begin{array}{cc}50000  \\  25000 \\ 149988\end{array}$ 
     &11    & 717	& 1.06e+3	&11 &2614	&	4.26e+2	& 11 & 2619	&	4.36e+2 &	11&  2612  &4.51e+2\\ \hline			
\end{tabular}
}
\medskip
\caption{Comparison between $\pex$ and $\pup$ on the second set of test problems.\label{table2}}
\end{center}

\end{sidewaystable}

The results in Table~\ref{table2} show that when the Schur complement is denser, the updating
procedure provides a significant reduction in the overall computation time, because the
increase in the SQMR iterations is largely offset by the time saving obtained by updating
the factors of the Schur complement instead of recomputing them.
For the problems under consideration the reduction ranges from 21\%, for CVXQP3-M-D2 with
$q=0$, to 80\%, for STCQP2-D with all the three values of $q$. Comparing the behaviour
of the updating strategy on CVXQP3-M-D and CVXQP3-M-D2, we see that the percentage
of time saved with the updating strategy drops from 53-54\% to 21-24\% when going from
CVXQP3-M-D to CVXQP3-M-D2 (the latter has a sparser Schur complement).
A similar behaviour can be observed by comparing the results obtained on
CVXQP3-D and CVXQP3-D2. In this case the best time reduction for CVXQP3-D
amounts to 68\% ($q=0$), while the best one for CVXQP3-D is 47\% ($q=50$).
Furthermore, the time reduction also holds when the number of IP iterations corresponding
to the updating strategy is greater than the number of IP iterations obtained with the
exact preconditioner (see CVXQP3-D2). Surprisingly, also the reverse may happen,
i.e., the updating procedure may slightly reduce the number of IP iterations
(see CVXQP1-M-D and MOSARQP1-D).

We further note that the number of iterations obtained with $\pup$ generally decreases
as $q$ increases; thus, for the second set of problems, updating the Schur complement
by low-rank information appears to be beneficial in terms of iterations.
There are also some cases where the number of SQMR iterations
is practically constant as $q$ varies (see STCQP2-D and QPBAND-D).
In these cases, as for the corresponding problems in the first test set, we verified that
either the number of elements $\gamma_{l_i}(H)$ with indices in $\Gamma$
is very small or even zero, or those values of $\gamma_{l_i}(H)$ are not
well separated from the remaing ones, thus making the low-rank modification ineffective.
For similar reasons the reduction of the number of iterations
from $q=50$ to $q=100$ is generally less significant than from $q=0$ to $q=50$.
Finally, the best results in terms of execution time are mostly obtained with $q>0$.

To provide more insight into the behaviour of the updated preconditioners,
in Tables~\ref{tCVXQP3}-\ref{tMOSARQP1-D} we show some details concerning
the solution of the sequences of KKT systems arising from four problems, i.e.,
CVXQP3, CVXQP3-D, MOSARQP1 and MOSARQP1-D.
For each IP iteration we report the number, $its$, of SQMR iterations,
as well as the time, $T_{prec}$, for building the preconditioner, the time, $T_{solve}$,
for solving the linear system, and their sum, $T_{sum}$. The last row contains
the total number of SQMR iterations and the total times, over all IP iterations, while
the rows in bold correspond to the IP iterations at which the preconditioner
is refreshed. These tables clearly support the previous observation that the updating strategy is
efficient when the computation of $\pex$ is expensive, as it is for CVXQP3-D and  MOSARQP1-D.
Conversely, when the time for building $\pex$ is modest, recomputing $\pex$ is a natural choice.
It also appears that the refresh strategy plays a significant role in achieving
efficiency, since it prevents the preconditioner from excessive deterioration.
Finally, when the time for computing $\pex$ is not dominant, the refresh tends to occur
more frequently, since a small increase in the number of iterations obtained with $\pup$
may easily raise the execution time over 90\% of the time corresponding to the last
application of the exact preconditioner.

\begin{table}
{\small
\begin{center}
\begin{tabular}{|c|rccc|rrrr|}
\hline
 &
\multicolumn{4}{|c|}{$\pex$} & 
\multicolumn{4}{|c|}{$\pup$  ($q$=50)}\\
\cline{2-9}
$I\!P \; it$ & $its$ & $T_{prec}$ & $T_{solve}$ & $T_{sum}$  &  $its$ & $T_{prec}$ & $T_{solve}$ & $T_{sum}$ \\
\hline
 {1} & 23& 8.82e-2& 2.50e-1& 3.38e-1   &   {\bf 23}  & {\bf 8.79e-2}& {\bf 2.49e-1} & {\bf 3.37e-1}\\
 {2} &     8& 7.86e-2& 9.17e-2& 1.70e-1         &           15&    2.21e-2& 1.70e-1& 1.92e-1\\
  {3} &    6& 7.06e-2& 7.35e-2& 1.44e-1            &        20&  2.17e-2& 2.17e-1& 2.39e-1\\
   {4} &   5& 7.86e-2& 6.16e-2& 1.40e-1           &         28&     1.92e-2& 2.93e-1& 3.12e-1\\
  {5} &    5& 7.86e-2& 6.16e-2& 1.40e-1            &        {\bf 5}&    {\bf  7.45e-2} &{\bf  5.85e-2} & {\bf 1.33e-1}\\
  {6} &    5& 7.46e-2& 5.90e-2& 1.34e-1             &       9&     5.54e-3& 9.98e-2& 1.05e-1\\
   {7} & 7& 7.86e-2& 8.39e-2& 1.62e-1             &       17&    1.37e-2& 1.84e-1& 1.98e-1\\
   {8} &   7& 7.46e-2& 8.00e-2& 1.55e-1             &       {\bf 7}&     {\bf 7.46e-2}& {\bf 7.96e-2}& {\bf 1.54e-1}\\
  9 &   9& 7.46e-2& 1.01e-1& 1.75e-1               &     11&     1.05e-2& 1.20e-1& 1.31e-1\\
   {10} &  9& 7.86e-2& 9.81e-2& 1.77e-1         &            20&     1.48e-2& 2.01e-1& 2.16e-1\\
 {11} &   11& 7.86e-2& 1.19e-1& 1.98e-1          &       {\bf  11}&   {\bf  7.46e-2} & {\bf 1.19e-1} & {\bf 1.93e-1}\\
 {12} &   12& 7.46e-2& 1.31e-1& 2.05e-1          &           14&    1.05e-2&  1.42e-1& 1.52e-1\\
 {13} &   12& 7.86e-2& 1.26e-1& 2.04e-1           &          34&   1.88e-2& 3.34e-1& 3.53e-1\\
 {14} &   12& 7.86e-2& 1.25e-1& 2.04e-1               &      {\bf 12}&     {\bf 7.45e-2}& {\bf 1.24e-1}& {\bf 1.99e-1}\\
 { 15} &   12& 7.06e-2& 1.25e-1& 1.96e-1      &               16&     1.06e-2& 1.59e-1& 1.70e-1\\
 { 16} &   12& 7.86e-2& 1.26e-1& 2.04e-1             &        28&     1.17e-2& 2.72e-1& 2.84e-1\\
 {17} &   14& 7.86e-2& 1.47e-1& 2.25e-1            &         {\bf 14}& {\bf  7.47e-2} & {\bf 1.45e-1}& {\bf 2.20e-1}\\
 {18} &   14& 7.46e-2& 1.44e-1& 2.19e-1          &           20&     1.04e-2& 1.99e-1& 2.10e-1\\
   {19} & 14& 7.86e-2& 1.44e-1& 2.23e-1            &      {\bf  14}& {\bf  7.06e-2}& {\bf 1.43e-1} & {\bf 2.13e-1}\\
 { 20} &   14& 7.46e-2& 1.41e-1& 2.16e-1      &               21&     1.06e-2& 2.01e-1& 2.12e-1\\
 {21} &   16& 7.86e-2& 1.65e-1& 2.43e-1       &         {\bf 16}& {\bf 7.46e-2}& {\bf 1.63e-1} & {\bf 2.37e-1}\\
 {22} &   14& 7.86e-2& 1.43e-1& 2.22e-1           &          23&     1.02e-2& 2.31e-1& 2.41e-1\\
\vdots & \vdots & \vdots & \vdots & \vdots & \vdots & \vdots & \vdots & \vdots  \\
  {34} &  28& 7.46e-2& 2.68e-1& 3.43e-1         &          {\bf  28}&  {\bf  7.46e-2} & {\bf 2.65e-1} & {\bf 3.40e-1}\\
  {35} &   28& 7.46e-2& 2.71e-1& 3.45e-1          &        42&     1.14e-2& 3.97e-1& 4.08e-1\\
\hline
&   523& 2.69e+0 & 5.36e+0& 8.04e+0    &                      755&      1.33e+0& 7.53e+0& 8.86e+0 \\
         \hline
\end{tabular}
\end{center}
}
\medskip
\caption{CVXQP3: details for $\pex$ and $\pup$ with $q=50$.\label{tCVXQP3}}
\end{table}

\begin{table}
{\small
\begin{center}
\begin{tabular}{|c|rccc|rrrr|}
\hline
 &
\multicolumn{4}{|c|}{$\pex$} & 
\multicolumn{4}{|c|}{$\pup$  ($q$=50)}\\
\cline{2-9}
$I\!P \; it$ & $its$ & $T_{fact}$ & $T_{solve}$ & $T_{sum}$ & $its$ & $T_{prec}$ & $T_{solve}$ & $T_{sum}$\\
\hline
 1&   30& 5.18e+0& 1.19e+0& 6.37e+0          &       {\bf  30}   &{\bf 5.24e+0}& {\bf 1.18e+0}  & {\bf 6.42e+0}\\
 2&   12& 5.16e+0& 4.87e-1& 5.65e+0             &        14   &5.52e-1& 5.54e-1  & 1.11e+0\\
 3&    8& 5.16e+0& 3.40e-1& 5.50e+0              &       15   &5.49e-1& 6.01e-1  & 1.15e+0\\
 4&    5& 5.12e+0& 2.24e-1& 5.35e+0               &       13   &5.11e-1& 5.29e-1  & 1.04e+0\\
 5&    5& 5.14e+0& 2.27e-1& 5.37e+0               &        36   &5.52e-1& 1.37e+0  & 1.92e+0\\
 6&    8& 5.16e+0& 3.37e-1& 5.50e+0               &      48   &6.00e-1& 1.79e+0  & 2.39e+0\\
 7&   10& 5.15e+0& 4.15e-1& 5.57e+0            &       {\bf  10}   &{\bf 5.24e+0}& {\bf 4.17e-1}  & {\bf 5.66e+0}\\
 8&   12& 5.16e+0& 4.93e-1& 5.65e+0             &        15  &1.56e-1& 5.89e-1  & 7.45e-1\\
 9&   14& 5.13e+0& 5.61e-1& 5.70e+0              &       22   &2.76e-1& 8.39e-1  & 1.11e+0\\
10&   14& 5.18e+0& 5.58e-1& 5.74e+0              &      41   &4.82e-1& 1.54e+0  & 2.02e+0\\
11&   16& 5.14e+0& 6.33e-1& 5.78e+0             &       78   &4.90e-1& 2.90e+0  & 3.39e+0\\
12&   17& 5.17e+0& 6.68e-1& 5.83e+0             &      139   &4.66e-1& 5.09e+0  & 5.56e+0\\
13&   19& 5.14e+0& 7.40e-1& 5.88e+0              &     {\bf 19}  & {\bf 5.25e+0} & {\bf 7.44e-1}  & {\bf 5.99e+0}\\
14&   21& 5.15e+0& 8.11e-1& 5.97e+0              &      31& 1.95e-1& 1.16e+0  & 1.36e+0\\
15&   24& 5.15e+0& 9.24e-1& 6.08e+0               &     62   &4.68e-1& 2.32e+0  & 2.79e+0\\
16&   26& 5.13e+0& 1.39e+0& 6.51e+0              &     86   &4.72e-1& 3.17e+0  & 3.64e+0\\
17&   47& 5.27e+0& 1.76e+0& 7.03e+0               &    160  &4.61e-1& 5.83e+0  & 6.29e+0\\
\hline
          & 288& 8.77e+1& 1.18e+1& 9.95e+1         &     819        & 2.20e+1& 3.06e+1& 5.26e+1\\
        \hline 
\end{tabular}
\end{center}
}
\medskip
\caption{CVXQP3-D: details for $\pex$ and $\pup$ with $q=50$.\label{tCVXQP3-D}}
\end{table}

\begin{table}
{\small
\begin{center}
\begin{tabular}{|c|rccc|rrrr|}
\hline
 &
\multicolumn{4}{|c|}{$\pex$} &  
\multicolumn{4}{|c|}{$\pup$  ($q$=100)}\\
\cline{2-9}
$I\!P \; it$ & $its$ & $T_{prec}$ & $T_{solve}$ & $T_{sum}$ & $its$ & $T_{prec}$ & $T_{solve}$ & $T_{sum}$\\
\hline
1&     1& 2.11e-1& 2.95e-2& 2.41e-1           &      {\bf 1}& {\bf  2.09e-1}& {\bf 2.95e-2}& {\bf 2.39e-1}\\
 2&    2& 2.09e-1& 4.53e-2& 2.54e-1           &    5&    3.48e-2& 9.29e-2& 1.28e-1\\
 3&    2& 2.09e-1& 4.84e-2& 2.58e-1           &    9&     6.82e-2& 1.54e-1& 2.22e-1\\
 4&    3& 2.09e-1& 6.55e-2& 2.75e-1           & {\bf 3} & {\bf 2.05e-1} & {\bf 6.15e-2} & {\bf 2.66e-1}\\
 5&    3& 2.13e-1& 6.29e-2& 2.76e-1           &  18  & 9.65e-3& 2.77e-1& 2.86e-1\\
 6&    4& 2.09e-1& 8.21e-2& 2.91e-1           & {\bf   4} & {\bf 2.01e-1}& {\bf 7.30e-2}& {\bf 2.74e-1}\\
 7&    4& 2.13e-1& 7.84e-2& 2.92e-1           &   16 & 4.92e-2& 2.56e-1& 3.06e-1\\
 8&    4& 2.09e-1& 7.76e-2& 2.87e-1           & {\bf    4} & {\bf 2.05e-1}& {\bf 7.30e-2}& {\bf 2.78e-1}\\
 9&    4& 2.13e-1& 7.51e-2& 2.88e-1           &   19&   7.03e-2& 2.92e-1& 3.63e-1\\
10&    5& 2.09e-1& 9.20e-2& 3.01e-1          &  {\bf   5} & {\bf 2.01e-1}& {\bf 8.67e-2}& {\bf 2.87e-1}\\
11&    5& 2.17e-1& 8.93e-2& 3.06e-1          &   15 & 2.31e-2& 2.35e-1& 2.58e-1\\
12&    6& 2.13e-1& 1.09e-1& 3.23e-1          &   62& 6.90e-2& 9.12e-1& 9.81e-1\\
13&    5& 2.13e-1& 8.95e-2& 3.03e-1          & {\bf   5} & {\bf 2.09e-1}& {\bf 8.42e-2} & {\bf 2.93e-1}\\
14&    6& 2.13e-1& 1.07e-1& 3.20e-1          &    21 & 4.69e-2& 3.22e-1& 3.69e-1\\
15&    6& 2.13e-1& 1.07e-1& 3.20e-1          & {\bf    6} & {\bf 2.01e-1}& {\bf 1.01e-1}& {\bf 3.02e-1}\\
16&    6& 2.09e-1& 1.04e-1& 3.13e-1          &   22& 2.73e-2& 3.35e-1& 3.63e-1\\
\hline
   &  66 & 3.38e+0&1.27e+0 &4.65e+0   &            215& 1.83e+0& 3.39e+0& 5.21e+0 \\
\hline
\end{tabular}
\end{center}
}
\medskip
\caption{MOSARQP1: details for $\pex$ and $\pup$ with $q=100$.\label{tMOSARQP1}}
\end{table}

\begin{table}
{\small
\begin{center}
\begin{tabular}{|c|rccc|rrrr|}
\hline
 &
\multicolumn{4}{|c|}{$\pex$} &  
\multicolumn{4}{|c|}{$\pup$  ($q$=100)}\\
\cline{2-9}
$I\!P \; it$ & $its$ & $T_{prec}$ & $T_{solve}$ & $T_{sum}$ & $its$ & $T_{prec}$ & $T_{solve}$ & $T_{sum}$\\
\hline
1&    1& 1.95e+0& 5.99e-2& 2.01e+0           &  {\bf   1}& {\bf 1.90e+0}& {\bf 5.93e-2}& {\bf 1.96e+0}\\
 2&    1& 1.92e+0& 5.92e-2& 1.98e+0           &    4& 4.78e-2& 1.34e-1& 1.82e-1\\
 3&    1& 1.91e+0& 5.85e-2& 1.97e+0           &    8 & 8.98e-2& 2.46e-1& 3.36e-1 \\
 4&    2& 1.93e+0& 9.02e-2& 2.02e+0           &   15& 7.31e-1& 4.36e-1& 1.17e+0 \\
 5&    2& 1.91e+0& 9.01e-2& 2.00e+0           &   22& 7.31e-1& 6.31e-1& 1.36e+0\\
 6&    3& 1.91e+0& 1.18e-1& 2.03e+0           &   57 & 5.05e-1& 1.56e+0& 2.07e+0\\
 7&    3& 1.91e+0& 1.19e-1& 2.03e+0           &  {\bf  3} & {\bf 1.89e+0}& {\bf 1.13e-1}& {\bf 2.00e+0}\\
 8&    4& 1.90e+0& 1.51e-1& 2.05e+0           &   15& 5.10e-2& 4.39e-1& 4.90e-1\\
 9&    4& 1.90e+0& 1.48e-1& 2.05e+0           &   20 & 5.09e-2& 5.74e-1& 6.25e-1\\
10&    4& 1.91e+0& 1.47e-1& 2.06e+0          &   28 & 3.82e-1& 7.92e-1& 1.17e+0\\
11&    4& 1.90e+0& 1.50e-1& 2.05e+0          &   56 & 7.17e-1& 1.55e+0& 2.27e+0\\
12&    4& 1.91e+0& 1.46e-1& 2.05e+0          &   {\bf 4}& {\bf 1.89e+0}& {\bf 1.42e-1}& {\bf 2.03e+0}\\
13&    4& 1.91e+0& 1.47e-1& 2.06e+0          &    5 & 9.13e-2& 1.64e-1& 2.55e-1\\
14&    4& 1.93e+0& 1.48e-1& 2.08e+0          &    9& 1.03e-1& 2.69e-1& 3.72e-1\\
15&    3& 1.91e+0& 1.19e-1& 2.03e+0          &   17& 1.03e-1& 4.86e-1& 5.89e-1\\
16&    4& 1.92e+0& 1.46e-1& 2.06e+0          &   48 & 7.10e-1& 1.32e+0& 2.03e+0\\
17&    4& 1.94e+0& 1.45e-1& 2.08e+0          &   {\bf  6} & {\bf 1.88e+0}& {\bf 1.94e-1}& {\bf 2.07e+0}\\
18&    6& 1.92e+0& 2.03e-1& 2.13e+0          &   16 & 5.13e-2& 4.57e-1& 5.08e-1\\
19&    6& 1.91e+0& 2.07e-1& 2.11e+0          &   47 & 7.16e-1& 1.30e+0& 2.02e+0\\
20&    5& 1.91e+0& 1.75e-1& 2.08e+0          &   {\bf  6}& {\bf 1.89e+0}& {\bf 1.97e-1}& {\bf 2.08e+0}\\
21&    6& 1.91e+0& 2.06e-1& 2.11e+0          &   20 & 2.21e-1 & 5.69e-1 & 7.90e-1\\
22&    6& 1.91e+0& 2.03e-1& 2.12e+0          &   33 & 4.98e-1 & 9.19e-1 & 1.42e+0\\
23&    6& 1.93e+0& 2.03e-1& 2.13e+0         &        &               &               &                \\
24&    6& 1.91e+0& 2.02e-1& 2.12e+0         &        &               &               &                 \\
\hline
    &93& 4.60e+1& 3.44e+0& 4.94e+1       &         440& 1.52e+1& 1.26e+1& 2.78e+1\\
\hline
\end{tabular}
\end{center}
}
\medskip
\caption{MOSARQP1-D: details for $\pex$ and $\pup$ with $q=100$.\label{tMOSARQP1-D}}
\end{table}

\section{Conclusion} 
We have proposed a preconditioner updating  procedure for the solution of sequences of KKT systems
arising in IP methods for convex QP problems. The preconditioners built by this procedure belong to the class of inexact CPs and are obtained by updating a given seed CP. The updates are
performed through low-rank corrections of the Schur complement of the (1,1) block in the seed 
preconditioner and generate factorized preconditioners. The rule for identifying the low-rank
corrections  is based on new bounds on the eigenvalues of the preconditioned matrix.
The numerical experiments show that our updated preconditioners, combined with a suitable
preconditioner refreshing, can be rather successful.
More precisely, the higher the cost of the Schur complement factorization, the more advantageous 
the updating procedure becomes. Finally, we believe that the updating strategy proposed here paves the way to the definition
of preconditioner updating procedures for sequences of KKT systems where the Hessian and constraint matrices
change from one iteration to the next.

\vskip 10pt
\noindent
\textbf{Acknowledgments.}
We are indebted to Tim Davis for his valuable help in interfacing PRQP with CHOLMOD.
We also express our thanks to Miroslav T\.{u}ma for making available a Fortran 90 implementation
of SQMR used in preliminary numerical experiments. Finally, we wish to thank the anonymous
referees for their useful comments, which helped us to improve the quality of this work.

\clearpage

\end{document}